 \newtheorem{thm}{Theorem}[section]
 \newtheorem{cor}[thm]{Corollary}
 \theoremstyle{definition}
 \newtheorem{defn}[thm]{Definition}
 \theoremstyle{remark}
 \newtheorem*{ex}{Example}
 \numberwithin{equation}{section}
\begin{document}

%
%
%
%
%
%
%
%
%

\title[Generalized $TAC$-contraction on Partial $b$-metric spaces]
 {Common Fixed Point Theorems with Generalized $TAC$-Contraction on Partial $b$-Metric Space}

\author[A. Gupta]{Anuradha Gupta }

\address{Department of Mathematics, Delhi College of Arts and Commerce, \\
University of Delhi, \\
Netaji Nagar, \\
New Delhi-110023, \\
India.}

\email{dishna2@yahoo.in}

\author[R. Mansotra]{Rahul Mansotra}
\address{Department of Mathematics\\University of Delhi\\
New Delhi 110023}
\email{mansotrarahul2@gmail.com}
\subjclass{ Primary 47H10; Secondary  54H25}

\keywords{Common Coincidence Points; Common Fixed Point; Cyclic-$(\gamma, \delta)$ Admissible mapping; Partial $b$-metric space; Generalized $TAC$-Contraction.}
\begin{abstract}
In this paper, we give common coincidence point 
 and common fixed point theorems for four self maps in the setting of generalized $TAC$-contraction in partial $b$-metric space. Also, we give  an example to authenticate the viability of the results. 
\end{abstract}

\maketitle
\section{Introduction and Preliminaries}
\begin{verbatim}\end{verbatim}
Results of fixed point theory is being constantly used in various mathematical disciplines, such as   general topology, operator theory, nonlinear analysis and many more. It has  applications in every aspect of life including various field of science. For instance,  engineering, physics, biology, economics, astrophysics etc. The pioneer result in this field is known as Banach Contraction Principle\cite{bm6}, which is considered as the origin of fixed point theory in metric spaces. Several variations of metric spaces has been developed by researchers in last two decades in the form of partial $b$-metric space, quasi-metric, 
 quasi $b$-metric, quasi partial $b$-metric space etc.
\par Alber and Guerre-Delabrere\cite{bm2} in 1997 gave a result which says a weakly  contractive map from Hilbert space to itself becomes a picard operator. Rhoades\cite{bm14} gave a generalized version of weakly contractive map on a complete metric space. ($\psi,\phi$)-weakly contractive map 
 in metric space was given by  Dutta  and Choudhury\cite{bm9}. In succession, Abbas and Doric\cite{bm1} gave common fixed point results on generalized ($\psi,\phi$)-weakly contractive map for four maps. Consequently, Chandok et.al\cite{bm7} came up with $TAC$-contraction and some fixed point results on them which is further extended to $b$-metric space by Babu and Dula\cite{bm5}.
 
 \par 
 Firstly, we give some preliminaries, definitions and notations, which will be used subsequently in the paper.
 \par The $b-$metric space was introduced by Czerwik\cite{bm8} as follows:
\begin{defn}\cite{bm8} Let $\mathcal{O}$ be a non empty set, $s \geq 1$ and $b :\mathcal{O} \times \mathcal{O} \rightarrow [ 0, \infty )$ is said to be  $b$-metric on $\mathcal{O}$ such that for all $z, s , t \in \mathcal{O}$:
\begin{enumerate}[label=(\roman*)]
\item$b(s,t)=0$ if and only if $  s = t $;
    \item $b(s,t) = b(t,s)$;
    \item $b(z,s) \leq s[b(z,t) + b(t,s)]$.
\end{enumerate}
Then, $(\mathcal{O},b,s)$ is called a $b$-metric space.
\end{defn}
 Recently, Shukla\cite{bm16} extended the definition of $b-$metric space as follows: 
\begin{defn} \cite{bm16} Let $\mathcal{O}$ be a non empty set, $s \geq 1$ and $\delta:\mathcal{O} \times \mathcal{O} \rightarrow [ 0, \infty )$ is said to be partial $b$-metric on $\mathcal{O}$ such that for all $z, s, t \in \mathcal{O} $:
\begin{enumerate}[label=(\roman*)]
    \item$ s = t$ if and only if $\delta(s,s) = \delta(s,t) = \delta(t,t)$;
    \item $\delta(z,z) \leq \delta(z,s)$;
    \item $\delta(z,s) = \delta(s,z)$;
    \item $\delta(z,s) \leq s[\delta(z,t) + \delta(t,s)] - \delta(t,t)$.
\end{enumerate}
Then, $( \mathcal{O} , \delta,s)$ is called a partial $b$-metric space. \end{defn}
\begin{defn}\cite{bm16}  Let $( \mathcal{O} , \delta , s)$ be a partial b-metric space. Then:
\begin{enumerate}[label=(\roman*)]
    \item  A sequence $ \{ z_{k}\}$ in $\mathcal{O}$ is said to be convergent if there is  $z \in \mathcal{O}$ such that  $ \lim_{k\rightarrow \infty} \delta(z_{k},z )=\delta(z,z).$
    \item A sequence  $ \{ z_{k}\}$  in $\mathcal{O}$ is said to be cauchy  in $ \mathcal{O}$ if  $ \lim_{k , m\rightarrow \infty} \delta(z_{k} , z_{m} )$  exists and is finite.
    \item $\mathcal{O}$ is said to be complete if for every Cauchy sequence $\{z_{k}\}$  there is $z \in \mathcal{O}$ such that $\lim_{k,m \rightarrow \infty } \delta(z_{k}  , z_{m}) =  \lim_{k \rightarrow \infty }\delta( z_{k}, z ) = \delta(z,z).$
    \item $A \subseteq \mathcal{O}$ is said to be partially $b$-closed if $\Bar{A} =A.$
     \end{enumerate}
     \end{defn}
The definition of cyclic 
 $( \gamma, \delta)$-
 admissible mapping for four self maps was given by  Hussain et.al.\cite{bm11} in the following way:
\begin{defn}\cite{bm11} Let $\mathcal{O}$ be a non empty set, $h, \eta , Q$ and $\mathcal{Z}:\mathcal{O}\rightarrow \mathcal{O}$ and $\gamma , \delta :\mathcal{O} \rightarrow [ 0, \infty ).$ Then, the pair $(h,\eta)$ is said to be cyclic $(\gamma , \delta)$-admissible with respect to $(Q,\mathcal{Z})$ if;
 \begin{enumerate}[label=(\roman*)]
     \item $\gamma(Qw) \geq 1$ for  $w \in \mathcal{O} $ implies $ \delta(hw) \geq 1$,
     \item  $\delta(\mathcal{Z}w) \geq 1$ for  $w \in \mathcal{O}$ implies $\gamma(\eta w) \geq 1$.
 \end{enumerate} \end{defn}
 \begin{defn}\cite{bm12}  Let $\mathcal{O}$ be a non empty set and $h, \eta$ be two self maps on $\mathcal{O}$. A point $w$ is said to be  coincidence point of pair $(h,\eta )$ if $hw = \eta w$.\end{defn}
Recently, Ansari\cite{bm4} gave  the definition of $C$-class function as follows:
\begin{defn}\cite{bm4} A function $ H:[ 0, \infty ) \times [ 0, \infty ) \rightarrow [ 0, \infty ) $ is called  $C-$class function if it  satisfies the following properties :
 \begin{enumerate}[label=(\roman*)]
 \item $H$ is continuous;
 \item $H(t,z) \leq t;$
 \item $H(t,z) = t$ implies that either  $t = 0$   or  $z = 0$, for all $z,t\in [0, \infty). $\end{enumerate}\end{defn}
 We refer\cite{bm4, bm10} for more information  on $C$-class functions.\vspace{0.2cm}\\
\textbf{Notations}: In this article, the following symbols are used to indicate:\\
 $ \mathbb{\mathbb{R^{+}}}:$  the set of all non-negative real numbers,\\
$\boldsymbol{\Xi} = \{ \xi:\mathbb{R^{+}} \rightarrow  \mathbb{R^{+}}| \xi$ is continuous, non-decreasing and $\xi^{-1}(0) = 0 \}$,\\
$\boldsymbol{\Omega} = \{ \omega : \mathbb{\mathbb{R^{+}}} \rightarrow \mathbb{R^{+}}|\omega(s_{n}) \rightarrow \infty $ implies $   s_{n} \rightarrow 0$ as $n \rightarrow \infty \}$,\\
$\boldsymbol{\Omega^{1} } = \{ \omega :  \mathbb{R^{+}} \rightarrow  \mathbb{R^{+}} | \omega (s_{n}) \rightarrow 0  \mbox{ implies }  s_{n} \rightarrow 0 \mbox{ as }  n \rightarrow \infty \mbox{ and } \omega \mbox{ is continuous} \}$,\\
$\textbf{W} = \{ 0,1,2,3,4,5,6,7,...\},$\\ 
$\textbf{P}(h,g) = \{ x : hx=gx\}$,\\
$\mathscr{C}:$ the class of all C-class functions.
\begin{defn}\cite{bm13} Let $\mathcal{O}$ be a non-empty set and $h, g$ be two self maps on $\mathcal{O}.$ A pair (h,g) is called  weakly compatible  if $h$ and $g$ commutes at the coincidence point.\end{defn}
 Chandok and Ansari\cite{bm7} introduced the concept of $TAC-$contraction in metric space in the following way:
\begin{defn}\cite{bm7} Let $(\mathcal{O},d)$ be a metric space and 
 $\gamma , \delta :\mathcal{O} \rightarrow \mathbb{R^{+}}$. An operator $\mathcal{Z} : \mathcal{O}\rightarrow \mathcal{O} $ is called  $TAC$-contractive 
map if for all $t , z \in \mathcal{O}$ with
 $\gamma(t) \delta(z) \geq 1$ implies  
 $\xi (  d( \mathcal{Z}t , \mathcal{Z}z  ) ) \leq  H( \xi ( d(t , z )) , \omega (d(t, z))) , $ 
    where $\xi \in \boldsymbol{\Xi} , \omega  \in \boldsymbol{\Omega}$ and $H \in \mathscr{C}$.
    \end{defn}
 Consequently, Babu and Dula\cite{bm5} gave the generalized TAC-contraction in $b$-metric space and proved corresponding fixed point theorems on them:
\begin{defn}\cite{bm5}  Let $(\mathcal{O},b,s)$ be a  $b$-metric space and $\gamma , \delta :
\mathcal{O} \rightarrow \mathbb{R^{+}}$ and $\mathcal{Z}$ be a self map on $\mathcal{O}$. Suppose that $\xi \in \boldsymbol{\Xi}, \omega  \in \boldsymbol{\Omega}$ and $H \in \mathscr{C}$ such that for all $t , v \in \mathcal{O}$ with $\gamma(t) \delta(v) \geq 1$   implies  $\xi ( s^{3} b( \mathcal{Z}t, \mathcal{Z}v  ) ) \leq  H( \xi ( N_{s}(t , v )) , \omega ( N_{s}(t , v))) , $ where $N_{s}(t, v) = \max \{ b(t, v) , b(t, \mathcal{Z}t), b(v , \mathcal{Z}v),\frac{b(t , \mathcal{Z}t) + b(v, \mathcal{Z}v)}{2s} \}$. Then, $\mathcal{Z}$ is called generalized $TAC-$ contraction on  $b$-metric space.
    \end{defn}
Recently, the generalized $TAC-$contraction in $b$-metric space for  four self maps was introduced by  Sastry et.al.\cite{bm15}  and gave the  fixed point theorems as follows :
\begin{defn}\cite{bm15}
Let $(\mathcal{O},b,s)$ be a  $b$-metric space and $\gamma , \delta : \mathcal{O} \rightarrow \mathbb{R^{+}}$ and $h ,\eta , Q ,\mathcal{Z}$ be  four self maps on $\mathcal{O}$. Suppose that  $\xi \in \boldsymbol{\Xi} , \omega  \in \boldsymbol{\Omega}$ and $H \in \mathscr{C}$ such that for all $t ,v \in \mathcal{O}$ with $\gamma(Qt) \delta(\mathcal{Z}v) \geq 1$ implies $ \xi ( s^{3} b( ht , \eta v ) ) \leq  H( \xi ( N_{s}(t , v )) , \omega ( N_{s}(t , v))) , $ where $N_{s}(t, v ) = \max \{ b(Qt, \mathcal{Z}v), b( ht, Qt) , $\\$b(\mathcal{Z}v , \eta v) ,
 \frac{b(Qt , \eta v) + b(ht, \mathcal{Z}v)}{2s} \}$.
 Then, the pair $(h,\eta )$ is called generalized $TAC-(Q,\mathcal{Z})$ contraction on $b$-metric space.\end{defn}
\begin{thm}\cite{bm15} Let $(\mathcal{O},b,s)$ be   a complete  $b$-metric space  and $h, \eta , Q,\mathcal{Z}$ be four self maps on $\mathcal{O}$. Assume that $\gamma ,  \delta:\mathcal{O} \rightarrow \mathbb{R^{+}} , \xi \in \boldsymbol{\Xi}, \omega \in \boldsymbol{\Omega}$ and $H \in \mathscr{C}$ such that $(h,\eta )$ is a generalized $TAC-(Q,\mathcal{Z})$ contraction with respect to $H$. Suppose that;
\begin{enumerate}[label=(\roman*)]
\item  $h\mathcal{O} \subseteq \mathcal{Z}\mathcal{O}$ and  $\eta \mathcal{O} \subseteq Q\mathcal{O}$,
\item there is $z_{0}\in Y$ such that $\gamma(Qz_{0})\geq 1$  and $\delta(\mathcal{Z}z_{0})\geq 1$,
\item if $\{ z_{k} \}_{k \in \textbf{W} } $ is a sequence in $\mathcal{O}$ with $z_{k} \rightarrow z,\gamma(z_{k})\geq  1$ and $\delta(z_{k})\geq 1,$ for  all $k\in \textbf{W} $, then $ \gamma(z) \geq 1$ and $\delta(z) \geq 1 $,
\item  $(h,\eta )$  is  cyclic $(\gamma,\delta)$-
admissible with respect to $(Q,\mathcal{Z})$,
\item one of the ranges $h\mathcal{O} , \eta \mathcal{O} ,\mathcal{Z}\mathcal{O} , Q\mathcal{O}$ is  $b$-closed.
\end{enumerate}
 Then, $\textbf{P}(h,Q)\neq \emptyset$ and $\textbf{P}(\eta ,\mathcal{Z})\neq \emptyset.$
 \end{thm}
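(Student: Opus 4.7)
The plan is to follow the standard Picard iteration strategy for four--map common fixed point theorems. Using the range inclusions $h\mathcal{O}\subseteq \mathcal{Z}\mathcal{O}$ and $\eta\mathcal{O}\subseteq Q\mathcal{O}$ from (i), I would start from the point $z_{0}$ supplied by (ii) and recursively pick $z_{1},z_{2},\ldots$ so that the sequence $\{y_{k}\}$ defined by $y_{2k}=hz_{2k}=\mathcal{Z}z_{2k+1}$ and $y_{2k+1}=\eta z_{2k+1}=Qz_{2k+2}$ is well--defined. A straightforward induction using the cyclic $(\gamma,\delta)$--admissibility of (iv) then yields $\gamma(Qz_{2k})\ge 1$ and $\delta(\mathcal{Z}z_{2k+1})\ge 1$ for every $k\in\mathbf{W}$, so the contractive condition can be invoked at every step of the iteration.

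Next, I would apply the generalized $TAC$--$(Q,\mathcal{Z})$ inequality with $(t,v)=(z_{2k},z_{2k+1})$ and simplify $N_{s}(z_{2k},z_{2k+1})$: the term $b(hz_{2k},\mathcal{Z}z_{2k+1})=b(y_{2k},y_{2k})=0$ drops out, the mixed term $b(Qz_{2k},\eta z_{2k+1})=b(y_{2k-1},y_{2k+1})$ is controlled by $s\bigl(b(y_{2k-1},y_{2k})+b(y_{2k},y_{2k+1})\bigr)$ via the relaxed triangle inequality, and what remains is $N_{s}=\max\{b(y_{2k-1},y_{2k}),b(y_{2k},y_{2k+1})\}$. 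Using the defining properties of $H\in\mathscr{C}$ (i.e.\ $H(t,z)\le t$ with equality forcing $t=0$ or $z=0$) together with the monotonicity of $\xi$, I would rule out the possibility that this maximum equals $b(y_{2k},y_{2k+1})$ unless that distance is already zero, deduce that the sequence $\{b(y_{k},y_{k+1})\}$ is non--increasing, hence convergent to some $r\ge 0$, and then pass to the limit to conclude $\xi(r)\le H(\xi(r),\omega(r))$, which via the $\Xi$-- and $\Omega$--properties forces $r=0$.

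The main obstacle will be proving that $\{y_{k}\}$ is a Cauchy sequence, since in $b$--metric spaces the metric--space argument fails because of the weakened triangle inequality and the possible discontinuity of $b$. The plan is the usual contradiction argument: assume not Cauchy, extract subsequences $\{y_{m_{k}}\},\{y_{n_{k}}\}$ with $m_{k}>n_{k}>k$, $b(y_{m_{k}},y_{n_{k}})\ge\varepsilon$, and $m_{k}$ minimal with this property; use the relaxed triangle inequality together with $b(y_{k},y_{k+1})\to 0$ to extract bounds on the $\limsup$ and $\liminf$ of the quantities $b(y_{m_{k}},y_{n_{k}})$, $b(y_{m_{k}+1},y_{n_{k}+1})$ and the cross terms; then plug a parity--adjusted pair $(z_{m_{k}},z_{n_{k}+1})$ into the contractive inequality to obtain, after passing to the limit, $\xi(s^{3}\varepsilon)\le H(\xi(\varepsilon),\omega(\varepsilon))\le\xi(\varepsilon)$, which together with $s\ge 1$ and the $C$--class equality clause forces $\omega(\varepsilon)=0$, and hence $\varepsilon=0$ by the $\Omega$--property, the desired contradiction.

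Finally, once Cauchyness is secured, completeness of $(\mathcal{O},b,s)$ yields a limit $u^{*}\in\mathcal{O}$, and assumption (v) lets me identify a preimage: if, say, $Q\mathcal{O}$ is $b$--closed, then since $y_{2k+1}=Qz_{2k+2}\to u^{*}$ there exists $p\in\mathcal{O}$ with $Qp=u^{*}$. Hypothesis (iii) supplies $\gamma(u^{*})\ge 1$ and $\delta(u^{*})\ge 1$, so applying the contractive condition with $(t,v)=(p,z_{2k+1})$ and letting $k\to\infty$ (with $N_{s}(p,z_{2k+1})\to b(hp,Qp)$) gives $\xi(s^{3}b(hp,Qp))\le H(\xi(b(hp,Qp)),\omega(b(hp,Qp)))$, forcing $hp=Qp$, so $p\in\mathbf{P}(h,Q)$. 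The symmetric argument, using the remaining range inclusion together with the closed range, produces a point $q\in\mathbf{P}(\eta,\mathcal{Z})$. The other three cases in (v) are handled identically by switching the roles of the maps via the inclusions $h\mathcal{O}\subseteq\mathcal{Z}\mathcal{O}$ and $\eta\mathcal{O}\subseteq Q\mathcal{O}$, which finishes the proof.
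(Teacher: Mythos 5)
Your plan follows essentially the same route as the paper's own proof of its partial $b$-metric analogue (Theorem 2.2): the alternating Picard sequence $y_{2k}=hz_{2k}=\mathcal{Z}z_{2k+1}$, $y_{2k+1}=\eta z_{2k+1}=Qz_{2k+2}$, propagation of $\gamma,\delta\ge 1$ via cyclic admissibility, reduction of $N_{s}$ to $\max\{b(y_{k-1},y_{k}),b(y_{k},y_{k+1})\}$, a non-Cauchy contradiction with limsup/liminf sandwich bounds and a subsequence extraction, completeness, and the closed-range case analysis at the limit point. The only caveat is that in the Cauchy step you cannot literally arrive at $\xi(s^{3}\varepsilon)\le H(\xi(\varepsilon),\omega(\varepsilon))$, since in a $b$-metric space one only gets $\varepsilon/s\le\lim N_{s}\le s^{2}\varepsilon$ along a subsequence (and a lower bound $\varepsilon/s$ for the displacement term); this is precisely what the $s^{3}$ factor absorbs, exactly as in the paper's inequalities (2.15)--(2.18), so the contradiction closes the same way.
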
 \par This paper aims to present the concept of generalized $TAC$-contraction in the setting of partial $b$-metric space, which is motivated by the concept of generalized $TAC-$contraction  in $b$-metric space given by Sastry et.al.\cite{bm15}. They have  obtained fixed point theorems for four  self maps $f,g,S,T$ in [Theorem $2.3,15$], motivated by their work, we have obtained common fixed point and  common coincidence point of four self maps for generalized $TAC-$contraction  in partial b-metric space and the viability of results are demonstrated with the help of  illustrations.
\section{Main results}
Generalized $TAC-$contraction was given by Sastri et.al.\cite{bm15} in b-metric space, therefore, we define the concept of generalized $TAC-(Q,\mathcal{Z})$ contraction in partial b-metric space in the following:
\begin{defn}Let $(\mathcal{O},\partial ,s)$ be a partial $b$-metric space, $\gamma , \delta:\mathcal{O} \rightarrow \mathbb{R^{+}}$ and $h ,\eta , Q,\mathcal{Z}$ be  four self maps on $\mathcal{O}$. Suppose that  $\xi \in \boldsymbol{\Xi} , \omega  \in \boldsymbol{\Omega^{1}}$ and $H \in  \mathscr{C}  $  with
 $\gamma(Qw) \delta(\mathcal{Z}z) \geq 1$  implies  $\xi ( s^{3} \partial( hw , \eta z  ) ) \leq H( \xi ( N_{s}(w , z )) , \omega ( N_{s}(w , z))) ,$
   where  $N_{s}(w, z ) = 
   \max \{\partial(Qw, \mathcal{Z}z) , \partial( hw, Qw) , \partial(\mathcal{Z}z , \eta z) ,
\frac{\partial(Qw , \eta z) + \partial(hw, \mathcal{Z}z)}{2s} \}$,  for all $w ,z \in \mathcal{O}$.
 Then, the pair $(h,\eta )$ is called generalized $TAC-(Q,\mathcal{Z})$ contraction in partial $b$-metric space.\end{defn}
\begin{ex} Let $\mathcal{O} = \mathbb{R^{+}} $ and define $ \partial : \mathcal{O} \times \mathcal{O} \rightarrow \mathbb{R^{+}} $ as 
$ \partial(w,z) = \max \{w, z\} $, for all  $ w , z \in \mathcal{O}.$ Clearly,  it is a partial $b$-metric space with s=1.\\
Also, define $ \gamma , \delta  : \mathcal{O} \rightarrow \mathbb{R^{+}} $ and $ h, \eta , Q , \mathcal{Z} : \mathcal{O} \rightarrow \mathcal{O} $ as \vspace{0.2cm}\\
$\gamma(z) =\begin{cases}
 \frac{\sqrt{z}}{16\sqrt{2}}, & \text{if $ z \in (0, 32 )  $ } \vspace{0.2cm}\\  2, & \text{otherwise  } 
  \end{cases},  \delta(z) = \begin{cases}
 \frac{1}{4}, & \text{if $ z \in (0, 8 )  $ } \vspace{0.2cm}\\  
 1, & \text{,otherwise } 
  \end{cases} $
  and $h(z) = z,  $ \vspace{0.2cm} \\
  $ \eta (z) =  \begin{cases}
 0, & \text{if $ z \in [ 0, 64 ) $ } \vspace{0.2cm}\\ \frac{z}{2}, & \text{otherwise} \end{cases}, \mathcal{Z}(z) = \begin{cases}
 z^{3}, & \text{if $ z \in [ 0, 64 ) $ } \vspace{0.2cm}\\  
 z^{6}, & \text{otherwise } 
  \end{cases}, ~
  Q(z)=\frac{z^{2}}{2},$\vspace{0.2cm}\\ 
   for all $z \in \mathcal{O}.   $\vspace{0.2cm}\\
Now, $\gamma(Qz)= \begin{cases}
 \frac{z}{32}, & \text{if $ z \in ( 0, 8 )  $ } \vspace{0.2cm}\\  2, & \text{otherwise } 
  \end{cases}  $ and $\delta(\mathcal{Z}z)=\begin{cases}
 \frac{1}{4}, & \text{if $ z\in ( 0, 2 )  $ }\vspace{0.2cm} \\  
1, & \text{otherwise  } 
  \end{cases} $
 \\
 Now, define $\xi , \omega:\mathbb{R^{+}} \rightarrow \mathbb{R^{+}}$ and 
 $H : \mathbb{R^{+}} \times   \mathbb{R^{+}} \rightarrow  \mathbb{R^{+}}$ as $\xi(w) = w, \omega(u)= log(u+3)$ and $ H(s, t) = \frac{1}{2}s.$  Clearly, $\xi \in \Xi ,\omega \in \Omega^{1} ,  H \in \mathscr{C}.$ \vspace{0.2cm}\\
 Now, $\gamma(Qz) \delta(\mathcal{Z}w) = \begin{cases}
     \frac{z}{128}, & \text{if $z \in (0,8)$ and $w \in (0,2)$}\vspace{0.2cm}\\
     \frac{z}{32},  & \text{if $z \in (0,8)$ and $w \in \{0\} \cup [2, \infty) $}\vspace{0.2cm}\\
     \frac{1}{2}, & \text{if $z \in \{0\} \cup [8, \infty)$ and $w \in (0,2)$} \vspace{0.2cm}\\
     2, & \text{if $z \in \{0\} \cup [8, \infty)$ and $w \in   \{0\} \cup [2, \infty) $}
 \end{cases} $ \vspace{0.2cm}\\
 Note that if $\gamma(Qz) \delta(\mathcal{Z}w) \geq 1 $, then $z \in \{0\} \cup [8, \infty)$ and $ w \in \{0\}\cup [2,\infty ). $ \\
Now, if $z \in \{0\} \cup [8, \infty)$ and $ w \in \{0\}\cup [2,64)$, then
$\xi(s^{3}\partial(hz , \eta w)) = \xi(\partial(z , 0 )) = \xi(z) = z$ and $\frac{\partial(Qz,\mathcal{Z}w)}{2} = \frac{\partial(\frac{z^{2}}{2},w^{3})}{2} \leq \frac{N_{s}(z,w)}{2} = H(  \xi(N_{s}(z,w)) ,$\\$ \omega(N_{s}(z,w)) ).$
Thus, $\xi(s^{3}\partial(hz , \eta w)) \leq  H(  \xi(N_{s}(z,w)) , \omega(N_{s}(z,w)) ),$ where $N_{s}(w, z ) =\max \{ \partial(Qw, \mathcal{Z}z) , \partial( hw, Qw) , \partial(\mathcal{Z}z , \eta z) ,
\frac{\partial(Qw , \eta z) + \partial(hw, \mathcal{Z} z)}{2s} \},$ for all $z \in \{0\} \cup [8, \infty)$ and $ w \in \{0\}\cup [2,\infty ).$
Again, if  $z \in \{0\} \cup [8, \infty)$ and $ w \in  [64,\infty ) $, then $\xi(s^{3}\partial(hz , \eta w)) = \xi( \max\{z, \frac{w}{2}\}) =\max \{z,\frac{w}{2}\}$ and $\frac{\partial(Qz,\mathcal{Z}w)}{2} = \frac{\partial(\frac{z^{2}}{2},w^{6})}{2} \leq \frac{N_{s}(z,w)}{2} = H(  \xi(N_{s}(z,w)) , \omega(N_{s}(z,w)) ).$
Thus, $\xi(s^{3}\partial(hz , \eta w)) \leq  H(  \xi(N_{s}(z,w)) , \omega(N_{s}(z,w)) ),$ for all $z \in \{0\} \cup [8, \infty)$ and $ w \in  [64,\infty ) $,
where $N_{s}(w, z ) = \max \{ \partial(Qw, \mathcal{Z}z) , \partial( hw, Qw) , \partial(\mathcal{Z}z , \eta z) ,
\frac{\partial(Qw , \eta z) + \partial(hw, \mathcal{Z}z)}{2s} \}$.
 Thus, the pair $(h,\eta )$ is  generalized $TAC-(Q,\mathcal{Z})$ contraction in partial $b$-metric space. 
\end{ex}
\begin{thm}
    Let $(\mathcal{O},\partial,s)$ be   a complete partial $b$-metric space  and $h, \eta ,Q,\mathcal{Z}$ be four self maps on $\mathcal{O}$. Suppose that there exist two mappings $\gamma ,  \delta:\mathcal{O} \rightarrow \mathbb{R^{+}}$ 
 and $\xi \in \boldsymbol{\Xi}, \omega \in \boldsymbol{\Omega^{1}}, H\in \mathscr{C}$ such that $(h,\eta )$ is a generalized $TAC-(Q,\mathcal{Z})$ contractive  mapping with respect to $H$. Assume that;
 \begin{enumerate}[label=(\roman*)]
\item $h\mathcal{O} \subseteq \mathcal{Z}\mathcal{O}$ and  $\eta \mathcal{O} \subseteq Q\mathcal{O}$.
\item there exists $v_{0}\in \mathcal{O}$ such that $\gamma(Qv_{0})\geq 1$  and $\delta (\mathcal{Z}v_{0})\geq 1$.
\item If $\{v_{m}\}_{m \in \textbf{W} }$ is a sequence in $\mathcal{O}$ such that $v_{m} \rightarrow v,\gamma(v_{m})\geq 1$ and $\delta(v_{m})\geq 1$, for  all $m\in \textbf{W}$,  then $\gamma(v)\geq 1$ and $\delta(v) \geq 1$.
\item $(h,\eta )$  is  cyclic $(\gamma,\delta)$-admissible with respect to $(Q,\mathcal{Z})$.
\item one of the ranges $h\mathcal{O} , \eta \mathcal{O} , \mathcal{Z}\mathcal{O} , Q\mathcal{O}$ is partially $b$-closed.
\end{enumerate}
Then, $\textbf{P}(h,Q) \neq \emptyset$ and $\textbf{P}(\eta ,\mathcal{Z})\neq \emptyset$ 
\end{thm}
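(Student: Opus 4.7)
The plan is to construct a Jungck-type Picard iteration and show that it converges to a common value that witnesses coincidence points for both pairs $(h,Q)$ and $(\eta,\mathcal{Z})$. Using (i) and the $v_{0}$ from (ii), I would recursively pick $\{v_{m}\}_{m\in \textbf{W}}\subset \mathcal{O}$ so that $w_{2m}:=hv_{2m}=\mathcal{Z}v_{2m+1}$ and $w_{2m+1}:=\eta v_{2m+1}=Qv_{2m+2}$ for all $m$. Hypothesis (iv), applied inductively from $\gamma(Qv_{0})\geq 1$ and $\delta(\mathcal{Z}v_{0})\geq 1$, propagates $\gamma(Qv_{2m})\geq 1$ and $\delta(\mathcal{Z}v_{2m+1})\geq 1$ for every $m$; this is exactly the admissibility needed to fire the $TAC$-$(Q,\mathcal{Z})$ contraction on the pairs $(v_{2m},v_{2m+1})$ and $(v_{2m+2},v_{2m+1})$.

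Substituting these pairs into the contraction yields
\[\xi\bigl(s^{3}\partial(w_{2m},w_{2m+1})\bigr)\leq H\bigl(\xi(N_{s}(v_{2m},v_{2m+1})),\,\omega(N_{s}(v_{2m},v_{2m+1}))\bigr),\]
and a mirror estimate after swapping the roles. Expanding $N_{s}$ in terms of $\partial(w_{m-1},w_{m})$ and $\partial(w_{m},w_{m+1})$, using the relaxed triangle inequality $\partial(a,b)\leq s[\partial(a,c)+\partial(c,b)]-\partial(c,c)$ to bound the mixed term, and invoking monotonicity of $\xi$ together with $H(t,z)\leq t$, would force $\partial(w_{m},w_{m+1})\leq \partial(w_{m-1},w_{m})$. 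Thus $d_{m}:=\partial(w_{m},w_{m+1})$ decreases to some $\ell\geq 0$; passing to the limit in the contraction and using the $C$-class dichotomy ($H(t,z)=t$ implies $t=0$ or $z=0$) together with $\omega\in \boldsymbol{\Omega^{1}}$ gives $\ell=0$, and a parallel argument gives $\partial(w_{m},w_{m})\to 0$.

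The main obstacle is showing that $\{w_{m}\}$ is Cauchy in the partial $b$-metric sense. I would argue by contradiction: suppose not, and extract subsequences $\{w_{m_{k}}\},\{w_{n_{k}}\}$ with $n_{k}>m_{k}>k$, $n_{k}$ minimal with $\partial(w_{m_{k}},w_{n_{k}})\geq \epsilon$ for some $\epsilon>0$, and indices chosen (after possibly shifting by $1$) so that $m_{k}$ is even and $n_{k}$ is odd. Iterated applications of the partial $b$-metric inequality produce uniform $\limsup$ and $\liminf$ bounds in terms of $\epsilon$ and $s$ for $\partial(w_{m_{k}},w_{n_{k}})$ and for the neighbouring distances $\partial(w_{m_{k}\pm 1},w_{n_{k}\pm 1})$ that enter $N_{s}(v_{m_{k}+1},v_{n_{k}+1})$. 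Substituting into the contraction at $(v_{m_{k}+1},v_{n_{k}+1})$, whose admissibility comes from the induction above, and letting $k\to\infty$, the $C$-class trichotomy again forces a contradiction with $\epsilon>0$. Hence $\{w_{m}\}$ is Cauchy, and completeness supplies $v\in \mathcal{O}$ with $w_{m}\to v$ and $\partial(v,v)=0$.

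Finally, I would extract the coincidence points from (v). Assume $Q\mathcal{O}$ is partially $b$-closed (the remaining three cases being entirely analogous). Then $w_{2m+1}=Qv_{2m+2}\in Q\mathcal{O}=\overline{Q\mathcal{O}}$ forces $v=Qu$ for some $u\in \mathcal{O}$. Hypothesis (iii), applied to the relevant admissible subsequences of iterates, lifts $\gamma\geq 1$ and $\delta\geq 1$ to $v$; then applying the contraction to the pair $(u,v_{2m+1})$ and letting $m\to \infty$ would give $\xi(s^{3}\partial(hu,v))\leq H(\xi(\partial(hu,v)),\omega(\partial(hu,v)))$, and the $C$-class trichotomy forces $\partial(hu,v)=0$, so $hu=v=Qu$, i.e., $u\in \textbf{P}(h,Q)$. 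A symmetric argument, using the inclusion $\eta \mathcal{O}\subseteq Q\mathcal{O}$ to write $v=\mathcal{Z}u'$ (or, if the closed range is one of the others, via the mirror construction), produces $u'\in \textbf{P}(\eta,\mathcal{Z})$, completing the proof.
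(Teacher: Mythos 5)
Your proposal is correct and follows essentially the same route as the paper's own proof: the alternating Jungck iteration $hv_{2m}=\mathcal{Z}v_{2m+1}$, $\eta v_{2m+1}=Qv_{2m+2}$, propagation of $\gamma,\delta\geq 1$ via cyclic admissibility, monotone decay of consecutive distances to $0$ via the $C$-class dichotomy, Cauchyness by contradiction with $\epsilon$-subsequences of suitable parity, and then the case analysis on the partially $b$-closed range with the contraction applied to the limit pairs. One cosmetic slip: to produce $u'$ with $\mathcal{Z}u'=v$ you should invoke $h\mathcal{O}\subseteq \mathcal{Z}\mathcal{O}$ applied to $v=hu$ (as the paper does), not $\eta\mathcal{O}\subseteq Q\mathcal{O}$; with that correction the argument matches the paper's.
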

\begin{proof}
By $(ii)$, $v_{0}$ exists and by $(i)$,  define $\{d_{m}\}_{m \in \textbf{W}}$  as  \begin{align} 
d_{2m}= hv_{2m}= \mathcal{Z}v_{2m+1}\mbox{ and }    d_{2m+1} =  Qv_{2m+2} =  \eta v_{2m+1}.\hspace{1.9cm}\end{align} 
 Claim $1$. $\{d_{m}\}_{m \in \textbf{W} }$ is a Cauchy sequence in $\mathcal{O}$.\\
Since $\gamma(Qv_{0})\ge1$ and $(h,\eta)$ is cyclic $(\gamma,\delta)$-admissible with respect to $(Q,\mathcal{Z})$, $\delta(hv_{0})\ge1$. So, $\delta(\mathcal{Z}v_{1})\ge1$ by $(2.1)$. Thus, $\gamma(\eta v_{1}) \ge1$ by  $(iv)$. So, $\gamma(Qv_{2})\geq 1$ by $(2.1).$ On recurring the process, we get\begin{align} \gamma (Qv_{2m}) \geq 1 \mbox{ and } \delta(\mathcal{Z}v_{2m+1})\geq1.\hspace{5.4cm}\end{align}
If $d_{2m} = d_{2m+1},$ then \begin{align*}  
N_{s}(v_{2m+2},v_{2m+1}) = \max \{\partial(Qv_{2m+2},\mathcal{Z}v_{2m+1}),\partial( hv_{2m+2}, Qv_{2m+2}),\hspace{1cm} \end{align*} \begin{align*}
&\hspace{5cm} \partial(\mathcal{Z}v_{2m+1}, \eta v_{2m+1}), \\ &\hspace{4cm}\frac{\partial(Qv_{2m+2} , \eta v_{2m+1}) + \partial(hv_{2m+2}, \mathcal{Z}v_{2m+1})}{2s}\} \\&\hspace{2cm}= \max\{\partial(d_{2m+1} , d_{2m}) ,\partial(d_{2m+2}, d_{2m+1}) , \partial(d_{2m} , d_{2m+1}),\\   &\hspace{4cm}\frac{\partial(d_{2m+1} , d_{2m+1}) + \partial(d_{2m+2}, d_{2m})}{2s} \} \\  &\hspace{2cm}\leq \max\{\partial(d_{2m+1} , d_{2m}) ,\partial(d_{2m+2}, d_{2m+1}), \\ &\hspace{4cm}\frac{\partial(d_{2m+1} , d_{2m+1}) + \partial(d_{2m+2}, d_{2m})}{2}\}\\ &\hspace{2cm}=\max\{\partial(d_{2m+1} , d_{2m}) ,\partial(d_{2m+2}, d_{2m+1}), \\ &\hspace{4cm}\frac{\partial(d_{2m+1} , d_{2m}) + \partial(d_{2m+2}, d_{2m+1})}{2}\}\\  &\hspace{2cm}=\max\{\partial(d_{2m+1} , d_{2m}) ,\partial(d_{2m+2}, d_{2m+1})\}\\  &\hspace{2cm}= \max\{\partial(d_{2m+1} , d_{2m+1}) ,\partial(d_{2m+2}, d_{2m+1})\}\\  &\hspace{2cm}\leq  \partial(d_{2m+2},d_{2m+1}).\end{align*}
 Hence, $N_{s}(v_{2m+2},v_{2m+1}) = \partial(d_{2m+2} , d_{2m+1}).$\\
Since $\gamma(Qv_{2m+2}) \delta(\mathcal{Z}v_{2m+1})\geq 1$ by inequality $(2.2)$, therefore, 
\begin{align*}\xi(\partial(d_{2m+2} , d_{2m+1})) &\leq  \xi(s^{3}\partial(hv_{2m+2}, \eta v_{2m+1})) \\ &\leq H(\xi(N_{s}(v_{2m+2} ,v_{2m+1}) ) , \omega(N_{s}(v_{2m+2} ,v_{2m+1}) )  ) \hspace{1.5cm}\\ 
 &= H(\xi(\partial(d_{2m+2} ,d_{2m+1}) ) , \omega(\partial(d_{2m+2} ,d_{2m+1}) ) ). \end{align*}
Also, $H(\xi(\partial(d_{2m+2} ,d_{2m+1}) ) , \omega(\partial(d_{2m+2} ,d_{2m+1}) )  ) \leq \xi(\partial(d_{2m+2} ,d_{2m+1}) )$\\ 
so, $H(\xi(\partial(d_{2m+2} ,d_{2m+1}) ) , \omega(\partial(d_{2m+2} ,d_{2m+1}) )  ) = \xi(\partial(d_{2m+2} ,d_{2m+1}) ).$\\
Thus, by the definition of $H$, we have $\xi(\partial(d_{2m+2} ,d_{2m+1}) ) = 0$  or $\omega(\partial(d_{2m+2} ,$\\$d_{2m+1}) ) = 0$. Also, by the definition of $\xi$ and  $\omega$, we get  
$\partial(d_{2m+2} ,d_{2m+1}) = 0$. Hence, $d_{2m+1} = d_{2m+2}.$ Thus, $ d_{2m} = d_{2m+1} = d_{2m+2}.$ 
 On iteration, we can  prove  that $d_{2m} = d_{2m+1}$, for all $m \in \textbf{W}$.
 Hence, $\{d_m\}_{m \in \textbf{W}}$ is a constant sequence.
 Therefore, $  \textbf{P}(h,Q)\neq \emptyset$ and $\textbf{P}(\eta ,\mathcal{Z})\neq \emptyset$.\\
 So, without loss of generality  assume that  $z_{2m} \neq z_{2m+1}$ for all $m \in \textbf{W}.$
\begin{align}
&\mbox{Claim 1(a).}\lim_{m\to\infty} \partial(z_{m}, z_{m+1}) = 0.\\ 
 &\mbox{Now, } N_{s}(v_{2m},v_{2m+1}) = \max\{\partial(Qv_{2m},\mathcal{Z}v_{2m+1}),\partial( hv_{2m}, Qv_{2m}) ,\notag \\ &\hspace{8cm}\partial(\mathcal{Z}v_{2m+1} , \eta v_{2m+1} ) 
\notag\\&\hspace{5cm}\frac{\partial(Qv_{2m} , \eta v_{2m+1}) + \partial(hv_{2m}, \mathcal{Z}v_{2m+1})}{2s}\} \notag\\ &\hspace{3.2cm}= \max\{\partial(d_{2m-1} , d_{2m}) ,\partial(d_{2m}, d_{2m-1}) , \partial(d_{2m} , d_{2m+1}),\notag \\&\hspace{5cm}
\frac{\partial(d_{2m-1} , d_{2m+1}) + \partial(d_{2m}, d_{2m})}{2s} \} \notag\\&\hspace{3.2cm}\leq \max\{\partial(d_{2m} , d_{2m-1}) ,\partial(d_{2m}, d_{2m+1}),\notag\\&
\hspace{5cm}\frac{s(\partial(d_{2m-1} , d_{2m}) + \partial(d_{2m}, d_{2m+1}))}{2s}\}\notag\end{align}
\begin{align*}&
\hspace{1.5cm}=\max\{\partial(d_{2m}, d_{2m-1}) ,\partial(d_{2m}, d_{2m+1})\\ &
\hspace{3.5cm} \frac{\partial(d_{2m-1} , d_{2m}) + \partial(d_{2m}, d_{2m+1})}{2}\}\\
&\hspace{1.5cm}=\max\{\partial(d_{2m-1} , d_{2m}) ,\partial(d_{2m}, d_{2m+1})\}.
\end{align*}
If $ \partial(d_{2m} , d_{2m+1}) > \partial(d_{2m} , d_{2m-1} )$,  then $N_{s}(v_{2m},v_{2m+1}) = \partial(d_{2m} , d_{2m+1}).$ \\
By inequality $(2.2)$, we have $ \gamma(Qv_{2m}) \delta(\mathcal{Z}v_{2m+1})\geq 1$ implies  
\begin{align}
  \xi(\partial(d_{2m} , d_{2m+1}) &\leq \xi(s^{3}\partial(hv_{2m} , \eta v_{2m+1})\hspace{2cm}\notag \\
  &\leq H(\xi(N_{s}(v_{2m} ,v_{2m+1}) ) , \omega(N_{s}(v_{2m} ,v_{2m+1}) )  ) \\ &=H(\xi(\partial(d_{2m} ,d_{2m+1}) ) , \omega(\partial(d_{2m} ,d_{2m+1}) )  )\notag\hspace{3.2cm} \end{align}
Also,  $H(\xi(\partial(d_{2m} ,d_{2m+1}) ) , \omega(\partial(d_{2m} ,d_{2m+1}) )  )  \leq \xi(\partial(d_{2m} ,d_{2m+1}) ),$\\
so, $H(\xi(\partial(d_{2m} ,d_{2m+1}) ) , \omega(d_{2m} ,d_{2m+1}) )  ) = \xi(\partial(d_{2m} ,d_{2m+1}) ).$\\
By the definition of H, we get $ \xi(\partial(d_{2m} ,d_{2m+1}) ) = 0$         or     $\omega(\partial(d_{2m} ,d_{2m+1}) ) = 0 $,\\
 thus, by the definition of $\xi$ and  $\omega$, we get $\partial(d_{2m} ,d_{2m+1}) = 0.$ So, $d_{2m} = d_{2m+1}$, which is absurd as $d_{2m} \neq d_{2m+1}$  for all $m \in \textbf{W}$. 
\begin{equation}
\mbox{So, }\partial(d_{2m} , d_{2m+1}) \leq \partial(d_{2m} , d_{2m-1})\mbox{ and }N_{s}(v_{2m},v_{2m+1}) = \partial(d_{2m} , d_{2m-1}).\end{equation}
Hence,  $\{ \partial(d_{2m} , d_{2m+1}) \}_{m \in \textbf{W}} $ is a non-increasing sequence of  real numbers.
So, it has a limit $ v \geq 0$ (say).
Also, by inequality $(2.4)$ and $(2.5)$,  we get \\ $\xi(\partial(d_{2m} , d_{2m+1}) \leq H(\xi(\partial(d_{2m} ,d_{2m-1}), \omega(\partial(d_{2m} ,d_{2m-1})  ) \leq   \xi(\partial(d_{2m} ,d_{2m-1})$ \\ 
Taking limit  and using continuity of $ H , \xi$   and $\omega$, we get 
$ \xi(v) \leq G( \xi(v) , \lim_{m\to\infty}$\\$ \omega(\partial(d_{2m} ,d_{2m-1}))) \leq \xi(v).$ 
So, $H( \xi(v) , \lim_{m\to\infty} \omega(\partial(d_{2m} , d_{2m-1}))) =\xi(v).$ Hence, $\xi(v) = 0 $  or 
$\lim_{m\to\infty} \omega(\partial(d_{2m} ,d_{2m-1})) =0  $. 
By the property of  $\xi$ and $\omega$, we get $v = 0.$ 
Thus, $\lim_{m\to\infty} \partial(d_{m}, d_{m+1}) = 0$.\begin{equation}
\mbox{Claim 1(b). } \{d_{2m}\}_{m \in \textbf{W}} \mbox{ is a Cauchy sequence.}\hspace{3.6cm}\end{equation}
Suppose $\{d_{2m}\}_{m \in \textbf{W}}$ is not  Cauchy. So,  there exists $\epsilon > 0 ,\{d_{2n(l)}\} $ and $ 
\{d_{2m(l)}\}$ subsequences of $ \{d_{2m}\} $
where  $ n(l) $ 
is the smallest  integer such that
\begin{align}
 n(l) > m(l) \geq l,~ \partial( d_{2n(l)} , d_{2m(l)} )\geq\epsilon  \mbox{ and }
\partial( d_{2n(l)-2} , d_{2m(l)} )<\epsilon.\end{align}
Using inequalities $(2.3)$ and $(2.7)$, we get 
\begin{align}&\epsilon \leq \partial(d_{2n(l)},d_{2m(l)})\leq s(\partial(d_{2n(l)}+d_{2n(l)-2})+\partial(d_{2n(l)-2},d_{2m(l)}))\hspace{1.5cm}\notag\\
  &\hspace{0.15cm}\leq s^{2}(\partial(d_{2n(l)}+d_{2n(l)-1})+\partial(d_{2n(l)-1},d_{2n(l)-2})) +s \epsilon, \notag \\*& 
\mbox{so, } \epsilon \leq \limsup_{l\rightarrow\infty} \partial(d_{2n(l)},d_{2m(l)}) \leq s \epsilon. \end{align}
Again, $\partial(d_{2n(l)},d_{2m(l)+1})\leq s(\partial(d_{2n(l)}+d_{2m(l)})+\partial(d_{2m(l)},d_{2m(l)+1})).$\\
Using inequalities $(2.3)$ and  $(2.8),$ we get \begin{align} &\limsup_{l\rightarrow\infty} \partial(d_{2n(l)},d_{2m(l)+1}) \leq s^{2} \epsilon.\\& 
\mbox{Also, } \epsilon \leq  \partial(d_{2n(l)},d_{2m(l)}) \leq  s(\partial(d_{2n(l)}, d_{2m(l)+1})+\partial(d_{2m(l)+1},d_{2m(l)})).\hspace{2cm}\notag\end{align}
Using inequalities  $(2.3)$ and  $(2.9),$  we get  
 \begin{equation}\frac{ \epsilon }{ s } \leq  \limsup_{l\rightarrow\infty} \partial(d_{2n(l)},d_{2m(l)+1}) \leq s^{2} \epsilon. \hspace{4.6cm}\end{equation} 
Now, $\partial(d_{2n(l)-1},d_{2m(l)}) \leq  s(\partial(d_{2n(l)-1}, d_{2n(l)})+\partial(d_{2n(l)},d_{2m(l)})).$ \\
Using inequalities $(2.3)$ and $(2.8)$, we get \begin{equation}
\limsup_{l\rightarrow\infty} \partial(d_{2n(l)-1},d_{2m(l)}) \leq s^{2} \epsilon. \hspace{5.3cm} \end{equation}
 Also, $\epsilon \leq \partial(d_{2n(l)},d_{2m(l)}) \leq  s(\partial(d_{2m(l)}, d_{2n(l)-1})+\partial(d_{2n(l)-1},d_{2n(l)}))$. \\ 
 Using inequalities $(2.3)$ and $(2.11)$, we get \begin{align}&\frac{\epsilon}{s} \leq  \limsup_{l\rightarrow\infty} \partial(d_{2n(l)-1},d_{2m(l)}) \leq s^{2} \epsilon.\\&
\mbox{Now, } \partial(d_{2m(l)+1},d_{2n(l)-1}) \leq  s(\partial(d_{2m(l)+1}, d_{2m(l)})+\partial(d_{2m(l)},d_{2n(l)-1}))\hspace{1cm}\notag\\
& \hspace{4cm} \leq s(\partial(d_{2m(l)+1}, d_{2m(l)}))+s^{2}(\partial(d_{2m(l)},d_{2n(l)})) +\notag\\& \hspace{5cm}s^{2}(\partial(d_{2n(l)} , d_{2n(l)-1} ) ).\notag\end{align}
Using inequalities $(2.3)$ and $(2.7)$, we get  \begin{align*} &\limsup_{l\rightarrow\infty} \partial(d_{2n(l)-1},d_{2m(l)+1}) \leq s^{3} \epsilon.\hspace{6.5cm}\\
  &\mbox{Since } \epsilon \leq \partial(d_{2n(l)},d_{2m(l)}) \leq  s(\partial(d_{2n(l)}, d_{2m(l)+1})+\partial(d_{2m(l)+1},d_{2m(l)}))\\&\hspace{1cm}\leq s(\partial(d_{2n(l)}, d_{2n(l)-1})) + s^{2}(\partial(d_{2n(l)-1},d_{2m(l)+1}) + \partial(d_{2m(l)+1},d_{2m(l)})).\end{align*} 
Therefore, using inequalities $(2.3)$ and $(2.13)$, we get \begin{align}\frac{\epsilon}{s^{2}} \leq  \limsup_{l\rightarrow\infty} \partial(d_{2n(l)-1},d_{2m(l)+1}) \leq s^{3} \epsilon. \hspace{4.1cm} \end{align}
\begin{align*}
\mbox{Now, } N_{s}(v_{2n(l)},v_{2m(l)+1})  &= \max \{ \partial(Qv_{2n(l)},\mathcal{Z}v_{2m(l)+1}) , \partial( hv_{2n(l)}, Qv_{2n(l)}),\\ &\hspace{1cm}\partial(\mathcal{Z}v_{2m(l)+1} , \eta v_{2m(l) +1} ),\\
&\hspace{1cm}\frac{\partial(Qv_{2n(l)} , \eta v_{2m(l)+1}) + \partial(hv_{2n(l)}, \mathcal{Z}v_{2m(l)+1})}{2s} \} \\
 &=\max \{\partial(d_{2n(l)-1} , d_{2m(l)}) ,\partial(d_{2n(l)}, d_{2n(l)-1}),\\ &\hspace{1.5cm}\partial(d_{2m(l)} , d_{2m(l)+1}), \\
&\hspace{1cm}\frac{\partial(d_{2n(l)-1} , d_{2m(l)+1}) + \partial(d_{2n(l)}, d_{2m(l)}}{2s}\}.\end{align*}
So, $\partial(d_{2n(l)-1} , d_{2m(l)}) \leq N_{s}(v_{2n(l)},v_{2m(l)+1}) \leq \max\{\partial(d_{2n(l)-1} , d_{2m(l)}) , \\ 
  \partial(d_{2n(l)}, d_{2n(l)-1}) , \partial(d_{2m(l)} , d_{2m(l)+1}), 
\frac{\partial(d_{2n(l)-1} , d_{2m(l)+1})  + \partial(d_{2n(l)}, d_{2m(l)})}{2s} \}.$\\
Using inequalities $ (2.3) , (2.8) , (2.12) ,$ and $(2.14) ,$ we get \\
$ \frac{\epsilon}{s} \leq \limsup_{l\rightarrow\infty}  \partial(d_{2n(l)-1},d_{2m(l)}) \leq \limsup_{l\rightarrow\infty} N_{s}(v_{2n(l)},v_{2m(l)+1}) \leq $\\$\max \{ s^2 \epsilon , \frac{1}{2s}(s^{3} \epsilon + s^ {2} \epsilon )\} \leq \max\{ s^2 \epsilon , \frac{1}{2s}(s^{3} \epsilon + s^ {3} \epsilon )\}$.\\
So, $\frac{\epsilon}{s} \leq \limsup_{l\rightarrow\infty} N_{s}(v_{2n(l)},v_{2m(l)+1})\leq s^{2} \epsilon.$\\
 Therefore, there exists a subsequence $\{N_{s}(v_{2n(l_{t})},v_{2m(l_{t})+1})\}$ such that 
\begin{equation}\frac{\epsilon}{s} \leq  \lim_{t\rightarrow\infty} N_{s}(\partial(v_{2n(l_{t})},v_{2m(l_{t})+1})) \leq  s^{2} \epsilon.\hspace{4.15cm}\end{equation} Since $\gamma(Qv_{2n(l_{t})}) \delta(\mathcal{Z}v_{2m(l_{t})+1})\geq 1$ by inequality $(2.2)$, therefore,   \begin{align} \xi(s^{3} \partial( hv_{2n(l_{t})} , \eta v_{2m(l_{t})+1})) \leq H( \xi ( N_{s}&(v_{2n(l_{t})} , v_{2m(l_{t})+1} )) , \notag \\ &\hspace{0.5cm}\omega ( N_{s}(v_{2n(l_{t})} , v_{2m(l_{t})+1} ))).      \end{align}
Also, by the contiuity of  $\xi$,  inequalities $(2.10)$ and $(2.15)$,  we have 
\begin{align}
\xi (\lim_{t\rightarrow\infty} N_{s}(v_{2n(l_{t})} , v_{2m(l_{t})+1} ))  &\leq \xi ( s^{3} \frac {\epsilon}{s} )\notag \\ &\leq \xi (s^{3} \lim_{t\rightarrow\infty} \partial(d_{2n(l_{t})} , d_{2m(l_{t})+1} )).\hspace{0.7cm}\end{align}
Also, by the continuity of  $ H , \xi ,$ and $ \omega$ in $(2.16)$,  we have
\begin{align} \xi(s^{3} \lim_{t\rightarrow\infty} \partial( hv_{2n(l_{t})} , \eta v_{2m(l_{t})+1})) \leq H( \xi (\lim_{t\rightarrow\infty} N_{s}(v_{2n(l_{t})} , v_{2m(l_{t})+1} )) ,\notag \\ \lim_{t\rightarrow\infty} \omega ( N_{s}(v_{2n(l_{t})} , v_{2m(l_{t})+1}))) \notag \\ \leq \xi(\lim_{t\rightarrow\infty}   N_{s}(v_{2n(l_{t})} , v_{2m(l_{t})+1} ) ).\hspace{0.3cm} \end{align}
Also, by  inequalities $(2.17)$ and $(2.18)$, we have 
$H(\xi(\lim_{t\rightarrow\infty} N_{s}(v_{2n(l_{t})} ,$\\$v_{2m(l_{t})+1} )) , \lim_{t\rightarrow\infty}  \omega ( N_{s}(v_{2n(l_{t})} ,v_{2m(l_{t})+1})))  
 = \xi ( \lim_{t\rightarrow\infty}   N_{s}(v_{2n(l_{t})} ,$\\$ v_{2m(l_{t})+1} ) ) ) .$
 By the definition of $ H  $, we have 
$\xi  ( \lim_{t\rightarrow\infty}   N_{s}(v_{2n(l_{t})} , v_{2m(l_{t})+1} ) ) ) $\\$= 0 $ or $\lim_{t\rightarrow\infty} \omega ( N_{s}(v_{2n(l_{t})} , v_{2m(l_{t})+1})))=0.$ 
So, by the definition of $\xi$ and $\omega$, we have $ \lim_{t\rightarrow\infty}  N_{s}(v_{2n(l_{t})} , v_{2m(l_{t})+1})= 0 $.
By inequality  $(2.15)$, we have  $\frac{\epsilon}{s} = 0 $ implies  $\epsilon =0 $, which is absurd as $ \epsilon > 0 $. 
Thus, $\{d_{2m}\}_{m\in \textbf{W}}$ is a Cauchy sequence.\\ 
For claim $1. \mbox{ For all } k , l \geq 0 $, we will only examine the following  cases: \\ 
Case $(i).$ For $ k= 2n+1 ,  l = 2m ,$ for all $n,m \geq 0 $, we have \\
$\partial(d_{k} ,d_{l} ) = \partial(d_{2n+1} , d_{2m} ) \leq s(\partial(d_{2n+1}, d_{2m}) + \partial(d_{2n} , d_{2m} ) ). $ By $(2.3)$ and $(2.6)$, we have $\lim_{k, l \rightarrow\infty}\partial(d_{k} ,d_{l}) = 0 $.\\
Case $(ii).\mbox { For }  k= 2n+1 , l= 2m+1 , \mbox{ for all } n, m \geq 0$, then\\
$\partial(d_{k} , d_{l}) = \partial(d_{2n+1}, d_{2m+1}) \leq s(\partial(d_{2n+1}, d_{2n})) + s^{2}(\partial(d_{2n}, d_{2m})) + $\\$s^{2}(\partial(d_{2m}, d_{2m+1})). $
By $(2.3)$ and  $(2.6)$, we have
$\lim_{k, l \rightarrow\infty}\partial(d_{k} ,d_{l}) = 0 $.\\
Thus, $\{d_{m}\}_{m\in \textbf{W}}$ is a Cauchy sequence.
Since $(\mathcal{O}, \partial , s )$ is complete, 
there exist $d \in \mathcal{O}$ such that 
$\lim_{m \rightarrow\infty} d_{m} = d. $
\begin{equation}\mbox{So,} \lim_{k \rightarrow\infty} hv_{2k} = \lim_{k \rightarrow\infty}\mathcal{Z}v_{2k+1}= \lim_{k \rightarrow\infty} Qv_{2k+2} = \lim_{k \rightarrow\infty}\eta v_{2k+1} =d.\hspace{0.6cm}\end{equation} 
Now, we examine the following cases:\\
Case (I). Assume $Q\mathcal{O}$ is closed.
So, $ d \in Q\mathcal{O}$, there exist $w \in\mathcal{O} $ such that $ Qw = d . $ \\
Claim $2$. $hw= d.$
\begin{equation}  \mbox{Now, }\gamma (Qw)\geq 1  \mbox{ and }  \delta(\mathcal{Z}v_{2m+1}) \geq 1 \mbox{ by }(iii) \mbox{ and  inequality } (2.2).\hspace{0.2cm}\end{equation}
Also, $\partial(hw , \eta v_{2m+1}) \leq s (\partial(hw , d) + \partial(d, \eta v_{2m+1})).$
Using equations  $(2.3)$ and \begin{align} (2.19),  \mbox{ we have } \limsup_{m \rightarrow\infty} \partial(hw , \eta v_{2m+1}) \leq s (\partial(hw , d) ).\hspace{2.2cm} \end{align}
Again,  $\partial(hw , d)  \leq s (\partial(hw , \eta v_{2m+1}) + \partial(\eta v_{2m+1}, d )).$ 
Using equations $(2.3)$  \begin{equation}\mbox{and } (2.19), \mbox{ we have } \frac{1}{s} \partial(hw , d) \leq \limsup_{m \rightarrow\infty} \partial(hw , \eta v_{2m+1}).\hspace{1.7cm}\end{equation}
By equations $(2.21)$ and $(2.22)$, we get a subsequence $\{\partial(hw , \eta v_{2m_{k}+1})\}$ with   
\begin{equation}\frac{1}{s}\partial(hw , d)\leq\lim_{k \rightarrow\infty} \partial(hw , \eta v_{2m_{k}+1}) \leq s (\partial(hw , d)). \hspace{2.9cm}\end{equation}
\begin{align*}\mbox {Now, } N_{s}(w,v_{2m_{k}+1}) &= \max\{\partial(Qw,\mathcal{Z}v_{2m_{k}+1}),\partial( hw, Qw) , \partial(\mathcal{Z}v_{2m+1} , \eta v_{2m+1} ),\\ 
&\hspace{1.2cm}\frac{\partial(Qw , \eta v_{2m_{k}+1}) + \partial(hw, \mathcal{Z}v_{2m_{k}+1})}{2s}\}\\
 &\leq \max\{\partial(Qw , \mathcal{Z}v_{2m_{k}+1}) ,\partial(hw, Qw) , \partial( \mathcal{Z}v_{2m+1} , \eta v_{2m_{k}+1}),
\\ &\hspace{1cm}\frac{\partial(Qw , \eta v_{2m_{k}+1}) + s(\partial(hw, Qw) + \partial(Qw, \mathcal{Z}v_{2m_{k}+1})) }{2s}\} \\ &\leq \max\{\partial(Qw , \mathcal{Z}v_{2m_{k}+1}) ,\partial(hw, Qw) , \partial(\mathcal{Z}v_{2m_{k}+1} , \eta v_{2m_{k}+1}),\\ 
&\hspace{1.2cm}\frac{\partial(Qw , \eta v_{2m_{k}+1})] + \partial(hw, Qw) + \partial(Qw, \mathcal{Z}v_{2m_{k}+1})}{2s}\}.\end{align*}
Using equations $(2.3)$ and $(2.19)$, we have $\lim_{k \rightarrow\infty} N_{s}(w, v_{2m_{k}+1}) = \partial( hw , Qw ). $\\
Since $\gamma (Qw) \delta(\mathcal{Z}v_{2m_{k}+1}) \geq 1$ from equation $(2.20)$, therefore, \begin{align}
 &\xi ( s^{3} \partial( hw , \eta v_{2m+1}  ) )\leq  H( \xi ( N_{s}(w , v_{2m_{k}+1} )) , \omega ( N_{s}(w , v_{2m_{k}+1}))).\hspace{0.7cm}
 \\ & \mbox{As, }\xi (\partial(hw , Qw)) \leq \xi( s^{3} \frac{1}{s} \partial( hw , Qw)) \leq  \xi ( s^{3} \lim_{k \rightarrow\infty}\partial( hw , \eta v_{2m_{k}+1} ) ).\hspace{0.05cm}\end{align}
 Using the continuity of $ H, \xi ,\omega$, 
 and equations $(2.24)$ and $(2.25)$,  we get\\
 $\xi (\partial(hw , Qw)) \leq  H( \xi (\lim_{k \rightarrow\infty} N_{s}(w , v_{2m_{k}+1} )) , \lim_{k \rightarrow\infty}\omega ( N_{s}(w , v_{2m_{k}+1}))) \leq \xi ( \partial( hw , Qw ) ). $
So, $H( \xi (\partial( hw , Qw )) , \lim_{k \rightarrow\infty}\omega ( N_{s}(w , v_{2m_{k}+1}))) = \xi ( \partial( hw , Qw ) ).$ \\
 Using the definition of $ H, \xi$ and $ \omega$,  we get $\lim_{k \rightarrow\infty} \omega ( N_{s}(w , v_{2m_{k}+1})) = 0$  or
 $\xi ( \partial( hw , Qw ) ) = 0$   implies  $\partial(hw, Qw) = 0$ or $\lim_{k \rightarrow\infty}  N_{s}(w , v_{2m_{k}+1})= 0$.
Hence, $\partial(hw , Qw)  = 0.$ 
 So, $hw = Qw = d. $ 
As $d =hw \in h\mathcal{O} \subseteq \mathcal{Z}\mathcal{O},$   we get $x \in \mathcal{O} $ such that $ \mathcal{Z}x = d. $ \\
Claim $3$. $\eta x =\mathcal{Z}x. $
\begin{equation}
\mbox{Now, }\gamma (Qv_{2m}) \geq 1 \mbox{ and } \delta (\mathcal{Z}x) \geq 1 , \mbox{ by } (iii) \mbox{  and equation } (2.2).\hspace{0.7cm}\end{equation}
Also, $\partial( hv_{2m} , \eta x  ) \leq s (\partial(hv_{2m} , \mathcal{Z}x) + \partial(\mathcal{Z}x , \eta x )).$
Using equations $(2.3)$ and \begin{equation}(2.19),  \mbox{ we have }\limsup_{m \rightarrow\infty} \partial(hv_{2m} , \eta x) \leq s(\partial(\mathcal{Z}x , \eta x ) ).\hspace{2.4cm} \end{equation}
Again, $\partial(\mathcal{Z}x, \eta x)  \leq s (\partial(\mathcal{Z}x , hv_{2m}) + \partial(hv_{2m}, \eta x )).$ 
Using $(2.3)$  and $(2.19)$, we \begin{equation}\mbox{get }\frac{1}{s} \partial(\mathcal{Z}x , \eta x) \leq \limsup_{m \rightarrow\infty} \partial( hv_{2m} , \eta x). \hspace{4.5cm}\end{equation}
From  equations $(2.28)$ and  $(2.29)$, we get a subsequence $\{\partial(hv_{2m_{k}} , \eta x)\}$ with  
\begin{align}&\frac{1}{s} \partial(\mathcal{Z}x , \eta x) \leq \lim_{k \rightarrow\infty} \partial(hv_{2m_{k}} , \eta x) \leq s (\partial(\mathcal{Z}x , \eta x) ).\\
&\mbox{Now, } N_{s}(v_{2m_{k}}, x ) = \max\{\partial(Qv_{2m_{k}},\mathcal{Z}x),\partial( Qv_{2m_{k}}, hv_{2m_{k}}) , \partial( \mathcal{Z}x , \eta x),\hspace{3cm}\notag\\&\hspace{4.4cm}\frac{\partial(Qv_{2m_{k}} , \eta x) + \partial(hv_{2m_{k}}, \mathcal{Z}x )}{2s}\}\notag\\ 
&\hspace{2.7cm}\leq \max\{\partial(Qv_{2m_{k}} , \mathcal{Z}x) ,\partial(Qv_{2m_{k}}, hv_{2m_{k}}) , \partial( \mathcal{Z}x , \eta x),\notag\end{align}
\begin{align}
&\hspace{3cm}\frac{\partial(Qv_{2m_{k}} , \mathcal{Z}x) + s(\partial(\mathcal{Z}x, \eta x) + \partial(hv_{2m_{k}}, \mathcal{Z}x))}{2s}\}\notag \\ &\hspace{2cm}\leq \max\{\partial(Qv_{2m_{k}} , \mathcal{Z}x) ,\partial(Qv_{2m_{k}}, hv_{2m_{k}}) , \partial(\mathcal{Z}x , \eta x), 
\notag\\&\hspace{3cm}\partial(Qv_{2m_{k}}, \mathcal{Z}x) +\partial(\mathcal{Z}x, \eta x) +\frac{\partial(hv_{2m_{k}} , \mathcal{Z}x)}{2} \}.\notag\\
\mbox{Using }& (2.3) \mbox{ and } (2.19),\mbox{ we have }
\lim_{k \rightarrow\infty} N_{s}( v_{2m_{k}} , x ) = \partial( \mathcal{Z}x , \eta x ).\hspace{0.7cm}\end{align}
Since
 $\gamma (Qv_{2m_{k}}) \delta(\mathcal{Z}x) \geq 1 $ from equation $(2.27)$, therefore, \begin{equation}
 \xi ( s^{3} \partial( hv_{2m_{k}} , \eta x  ) )\leq  H( \xi ( N_{s}(v_{2m_{k}} , x )) , \omega ( N_{s}(v_{2m_{k}} , x))).\hspace{1.8cm}\end{equation}
 Also, by inequality $(2.29)$, we have 
\begin{equation} \xi (\partial(\mathcal{Z}x , \eta x)) \leq \xi( s^{3} \frac{1}{s} \partial(\mathcal{Z}x , \eta x )) \leq  \xi ( s^{3} \lim_{k \rightarrow\infty}\partial( hv_{2m_{k}}, \eta x ) )\hspace{1.4cm}\end{equation}
 Using the continuity of $ H, \xi , \omega ,$ equations $ (2.31)$ and $ (2.32)$,  we get\\
 $\xi (\partial(\mathcal{Z}x , \eta x)) \leq  H( \xi (\lim_{k \rightarrow\infty} N_{s}(v_{2m_{k}} , x ) ) , \lim_{k \rightarrow\infty} \omega ( N_{s}(v_{2m_{k}} , x )))\leq $\\ $\xi ( \partial( \mathcal{Z}x ,\eta x ) ).$ So, $ H( \xi (\partial( \mathcal{Z}x , \eta x)) , \lim_{k \rightarrow\infty}\omega ( N_{s}(v_{2m_{k}} , x))) = \xi ( \partial(  \mathcal{Z}x , \eta x ) ). $
 Using the definition of $ H , \xi$ and $\omega$,  we get 
 $\xi ( \partial(  \mathcal{Z}x , \eta x ) ) = 0$ or  $\lim_{k \rightarrow\infty}$\\$\omega ( N_{s}(v_{2m_{k}} , x)) = 0.$
 So, $\partial( \mathcal{Z}x , \eta x) = 0$ or $\lim_{k \rightarrow\infty}  N_{s}(v_{2m_{k}} , x) = 0.$
 Hence, $\partial( \mathcal{Z}x , \eta x) = 0.$
 Thus,  $\mathcal{Z}x = \eta x = d. $
Therefore, $Qw = hw = \mathcal{Z}x = \eta x = d$.
Hence,  $ \textbf{P}(h,Q)\neq \emptyset$ and $\textbf{P}(\eta ,\mathcal{Z})\neq \emptyset$. \\
Case (II). Assume $\eta \mathcal{O}$ is closed.
So, $d \in \eta \mathcal{O} $. As $\eta \mathcal{O} \subseteq Q\mathcal{O}$,  $ d \in Q \mathcal{O}$. 
So, we get $  w \in \mathcal{O}$ such that $d= Qw$.
Hence, the same proof follows as in case (I).\\
 Case (III). Assume $\mathcal{Z}\mathcal{O}$ is closed. 
 So, $d \in \mathcal{Z}\mathcal{O} .$ Thus, we get $x \in \mathcal{O} $ such that $\mathcal{Z} x = d$. We can show $\eta x = d $ as in case (I), then $d\in  Q\mathcal{O}$. 
 Hence, the same proof  follows as in case (I).\\
 Case (IV). Assume $h\mathcal{O}$ is closed.
 So, $ d \in h\mathcal{O}$. As, $h \mathcal{O}\subseteq \mathcal{Z}\mathcal{O}$, we get $x \in\mathcal{O} $ such that $\mathcal{Z}x = d$. 
Hence, the same proof follows as in case (III).
\end{proof}
If we consider the weak compatibility of pairs $(h,Q)$ and $(\eta ,\mathcal{Z})$, then  they have coincidence points which is shown in the following:
\begin{thm} If the following conditions are added to Theorem $2.2$:
\begin{enumerate}[label=(\roman*)]
\item $( h, Q )$ and $( \eta , \mathcal{Z})$ are weakly compatible,
\item If the pairs $(h, Q)$ and $(\eta, \mathcal{Z})$  has coincidence points $w$ and $x$ respectively, then $\gamma (Qw) \geq 1$ and $\delta (\mathcal{Z}x) \geq 1$. 
\end{enumerate}
Then, $h ,\eta , Q$ and $\mathcal{Z}$ have a  common unique  fixed point in $\mathcal{O}$.
\end{thm}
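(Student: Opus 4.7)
The plan is to leverage Theorem 2.2, which gives coincidence points $w$ and $x$ with $hw=Qw=\eta x=\mathcal{Z}x=d$ for some $d\in\mathcal{O}$ (see Case (I) of its proof). First I would invoke weak compatibility: since $w$ is a coincidence point of $(h,Q)$, the pair commutes there, so $hd=h(Qw)=Q(hw)=Qd$, and analogously $\eta d=\eta(\mathcal{Z}x)=\mathcal{Z}(\eta x)=\mathcal{Z}d$. Thus $d$ is itself a coincidence point of both pairs, and hypothesis (ii) yields $\gamma(Qd)\geq 1$ and $\delta(\mathcal{Z}d)\geq 1$.

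Next I would show $\partial(d,d)=0$ by applying the generalized $TAC$-$(Q,\mathcal{Z})$ contraction to $(w,x)$: hypothesis (ii) gives $\gamma(Qw)\delta(\mathcal{Z}x)\geq 1$, and the substitutions $hw=Qw=\eta x=\mathcal{Z}x=d$ collapse every term in $N_s(w,x)$ to $\partial(d,d)$. The contractive inequality then reads $\xi(s^3\partial(d,d))\leq H(\xi(\partial(d,d)),\omega(\partial(d,d)))\leq \xi(\partial(d,d))$; since $\xi$ is non-decreasing and $s\geq 1$, equality holds throughout, and the $C$-class property of $H$ combined with the definitions of $\xi\in\boldsymbol{\Xi}$ and $\omega\in\boldsymbol{\Omega^{1}}$ forces $\partial(d,d)=0$. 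I would then apply the contraction at $(d,x)$: writing $u=hd=Qd$, one checks that both $\partial(hd,\eta x)$ and $N_s(d,x)$ reduce to $\partial(u,d)$ (the final entry of $N_s$ being $\frac{\partial(u,d)+\partial(u,d)}{2s}\leq \partial(u,d)$ because $s\geq 1$). Running the same $C$-class argument yields $\partial(u,d)=0$; property~(i) of the partial $b$-metric, together with $\partial(d,d)=0$, then gives $u=d$, so $hd=Qd=d$. A symmetric application at $(w,d)$ gives $\eta d=\mathcal{Z}d=d$, so $d$ is a common fixed point of all four maps.

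For uniqueness, suppose $d'$ is another common fixed point, so that $hd'=\eta d'=Qd'=\mathcal{Z}d'=d'$. Then $d'$ is a coincidence point of both pairs, whence $\gamma(d')\geq 1$ and $\delta(d')\geq 1$ by (ii). A preliminary application of the contraction at $(d',d')$ collapses $N_s$ to $\partial(d',d')$ and, by the same $C$-class template, gives $\partial(d',d')=0$. Applying the contraction one final time at $(d,d')$, the terms $\partial(hd,\eta d')$ and $N_s(d,d')$ both reduce to $\partial(d,d')$, and the $C$-class argument yields $\partial(d,d')=0$. Combined with $\partial(d,d)=\partial(d',d')=0$, the partial $b$-metric axiom forces $d=d'$.

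I do not expect a genuine obstacle: every step reuses the same $C$-class/\,$(\xi,\omega)$ machinery already exploited in Theorem 2.2. The only delicate bookkeeping is the careful collapse of each $N_s$ expression to a single distance term under the coincidence substitutions, where one must verify that the $\frac{1}{2s}$-averaged entry never exceeds the maximum of the other entries (which it does not, thanks to $s\geq 1$).
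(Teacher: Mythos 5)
Your proposal is correct and follows essentially the same route as the paper: use Theorem 2.2 to get $hw=Qw=\eta x=\mathcal{Z}x=d$, invoke weak compatibility and hypothesis (ii) to make $d$ a coincidence point with $\gamma(Qd)\geq 1$, $\delta(\mathcal{Z}d)\geq 1$, apply the contraction at $(d,x)$ and $(w,d)$ to obtain $hd=Qd=\eta d=\mathcal{Z}d=d$, and once more between the two candidate fixed points for uniqueness. Your preliminary applications at $(w,x)$ and $(d',d')$ to get $\partial(d,d)=0$ and $\partial(d',d')=0$ are harmless but redundant, since $\partial(u,v)=0$ already forces $u=v$ by axioms (i)--(ii) of a partial $b$-metric.
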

\begin{proof}  By Theorem  $2.2,$ we have   $d = hw = Qw = \mathcal{Z}x = \eta x.$ Thus,
$x$ becomes the  coincidence point of the pair $(\eta , \mathcal{Z} ).$
By  the weak compatibility of 
 the pair $(h, Q )$, we get
 $hd =hQw = Qhw = Qd.$
Thus, $d$ becomes a  coincidence point of the pair $(h, Q)$.
From $(ii)$, we get
$\gamma (Qd) \geq 1$ and  $\delta (\mathcal{Z}x) \geq 1$. 
So, $ \gamma (Qd)\delta (\mathcal{Z}x) \geq 1$ implies  \begin{equation}  \xi(\partial(hd , \eta x ) ) \leq \xi(s^{3}\partial(hd , \eta x )) \leq H(\xi (N_{s}(d , x) ), \omega(N_{s}(d , x)) ) \hspace{1.2cm} \end{equation}
\begin{align}
\mbox{Now, }  N_{s}(d, x) &= \max \{\partial(Qd, \mathcal{Z}x) , \partial(hd , Qd) , \partial(\mathcal{Z}x, \eta x),\frac{\partial(Qd, \eta x) + \partial(hd , \mathcal{Z} x)}{2s}\} \notag\\
&= \max  \{\partial(hd, \eta x) , \partial(hd , hd) , \partial(d, d),\frac{\partial(hd, \eta x) + \partial(hd , \eta x)}{2s}\}\notag\\
&\leq \max \{ \partial(hd , \eta x) , \partial(hd , \eta x) , 0 , \partial(hd , \eta x)\} 
 = \partial(hd , \eta x).\notag\\
\mbox{So, } N_{s}(d,x)&= \partial( hd , \eta x). \end{align}
By equations $(2.33)$, $(2.34)$ and the definition of H, we have
 $\xi(\partial(hd , \eta x )) $\\$\leq H(\xi (\partial(hd , \eta x )), \omega (\partial(hd , \eta x ))) \leq \xi(\partial(hd , \eta x ) ).$
  So, $H(\xi (\partial(hd , \eta x )), $\\$\omega (\partial(hd , \eta x ))) = \xi(\partial(hd , \eta x )),$ hence, $\xi (\partial(hd , \eta x )) = 0$ or $\omega (\partial(hd , \eta x )) = 0 .$
 Using the definition of $\xi$ and $\omega$, we have $ \partial(hd , \eta x ) = 0 $ implies $hd = \eta x. $ 
Also, $hd = Qd = d. $ Thus, $d$ becomes a fixed point of $h$ and $Q.$ By the weak compatibility of the pair  $(\eta, \mathcal{Z})$,  we get $\mathcal{Z}d  = \mathcal{Z}\eta x = \eta \mathcal{Z}x = \eta d.$ Hence, $d$ is a coincidence point of $(\mathcal{Z} ,\eta ).$ 
Again, by Theorem  $2.2,$ we have   $d = hw = Qw = \mathcal{Z}x = \eta x.$  Thus, $w$ is a coincidence point of $(h, Q).$
By $(ii)$,  we  have $\gamma (Qw) \geq 1 $ and $\delta(\mathcal{Z}d) \geq 1$.
So, $\gamma(Qw) \delta (\mathcal{Z}d) \geq 1$ implies 
\begin{align}\xi(\partial(hw , &\eta d ) ) \leq \xi (s^{3}\partial(hw , \eta d )) \leq H(\xi (N_{s}(w , d) ), \omega (N_{s}(w , d)) ) \\
\mbox{Now, } N_{s}(w &, d)) = \max \{\partial(Qw, \mathcal{Z}d) , \partial(hw , Qw) ,\partial(\mathcal{Z}d, \eta d), \hspace{5cm}\notag\\
&\hspace{4cm}\frac{1}{2s}(\partial(Qw, \eta d) + \partial(hw , \mathcal{Z}d))\}\notag\\
&\hspace{0.6cm}= \max  \{\partial(Qw, \mathcal{Z}d) , \partial(Qw , Qw) , \partial(\mathcal{Z}d, \mathcal{Z}d), \notag\\&\hspace{4cm}\frac{1}{2s}(\partial(Qw, \mathcal{Z}d) + \partial(Qw , \mathcal{Z}d))\} \notag\\
&\hspace{0.6cm} \leq \max \{ \partial(Qw , \mathcal{Z}d) , \partial(Qw , \mathcal{Z}d) , 0 ,  \partial(Qw , \mathcal{Z}d)\} \notag\\
&\hspace{0.6cm} = \partial(Qw , \mathcal{Z}d).\notag\\
\mbox{So, }  N_{s}(w , d)&= \partial( Qw , \mathcal{Z}d).\end{align}
From equations $(2.35)$, $(2.36)$ and by the definition of $H$, we have\\
 $\xi(\partial(hw , \eta d)) \leq H(\xi (\partial(hw , \eta d)), \omega (\partial( hw , \eta d)) \leq \xi(\partial(hw , \eta d ) ).$
 So, $H(\xi (\partial(hw , $\\$\eta d)), \omega (\partial( hw , \eta d)) = \xi(\partial(hw , \eta d)).$  Hence, $\xi(\partial(hw , \eta d )) = 0$ or $\omega (\partial(hw , \eta d )) = 0 .$ 
 Using the definition of $\xi$ and $\omega$, we get $\partial(hw , \eta d ) = 0.$ Thus, $hw = \eta d$.
Also, $hd = \mathcal{Z}d = d.$ Thus, $d$ becomes a common   fixed point  of $\mathcal{Z}$ and $\eta .$\\
Claim. $h, \eta, Q$ and  $\mathcal{Z}$ have a common   unique fixed point in $\mathcal{O}$. \\
Suppose $h, \eta , Q,$ and $\mathcal{Z}$ have a common fixed point $v$ (say)  in $\mathcal{O}.$
Thus,  $v$ becomes a coincidence point of $(h , Q)$. By $(ii)$, we have $\gamma(Qv)\geq 1$ and $\delta(\mathcal{Z}d) \geq 1.$ So, $\gamma(Qv) \delta (\mathcal{Z}d) \geq 1$ implies 
\begin{align} \xi (\partial (hv , &\eta d ) ) \leq \xi (s^{3}\partial(hv , \eta d )) \leq H(\xi (N_{s}(v , d) ), \omega (N_{s}(v , d)) ).\hspace{0.3cm}\\
\mbox{Now, }  N_{s}&(v , d)) = \max \{\partial(Qv, \mathcal{Z}d) , \partial(hd , Qd) ,\partial(\mathcal{Z}d,\eta d),\hspace{3.2cm}\notag\\&\hspace{4cm} \frac{1}{2s}(\partial(Qv, \eta d) + \partial(hv , \mathcal{Z}d))\}\notag\\
&\hspace{2.3cm}= \max \{\partial(v, d) , \partial(d , d) , \partial(d, d),\frac{1}{2s}(\partial(v, d) + \partial(v , d))\}\notag\\
&\hspace{2.3cm}\leq \max \{ \partial(v, d) , 0 , 0 ,  \partial(v,d )\}
= \partial(v , d).\notag\end{align}\begin{align}
\mbox{So, }  N_{s}(v, d)= \partial( v, d).\hspace{7cm}\end{align}
Thus, by inequalities $(2.37),(2.38)$ and the definition of $H ,  \xi,  \omega $, we have  $\xi(\partial(v , d ) )  \leq H(\xi (\partial(v , d) ), \omega (\partial(v, d)) ) \leq  \xi(\partial(v , d ) ).$ 
  So,   $\xi (\partial(v , d ) ) = 0$ or   \\ $\omega (\partial(v , d ) ) = 0.$ Hence,  $\partial(v , d ) = 0$ implies 
 $v =d.$ Thus $d$ is the   common unique fixed point of $h,\eta ,Q$ and $\mathcal{Z}$.\end{proof}
 The following are the consequences of Theorem $2.3$  to get the common unique fixed point for the pair $(h,Q)$ and $(\eta,\mathcal{Z})$.
\begin{cor}Let $C , D $ be two non empty closed subsets of a complete partial $b$-metric space $(\mathcal{O}, \partial , s)$ and $C \cap D  \neq \emptyset$. Assume that $h , \eta$ be two self maps on $C\cup D $  such that $hC \subseteq D$ and  $\eta D \subseteq C $. Suppose that there exist $\xi  \in \boldsymbol{\Xi} , \omega \in \boldsymbol{\Omega^{1}} , H \in \mathscr{C} $  such that $\xi(s^{3}\partial(hw, \eta v)) \leq  H(\xi(N_{s}(w, v) ,\omega(N_{s}(w,v)) )$ 
where $ N_{s}(w ,v) = \max\{\partial(w,v),\partial( hw, w) , \partial(v , \eta v ) , 
\frac{1}{2s}(\partial(w , \eta v) + \partial(hw , v)) \}.$ for all $w \in C$ and $v \in D.$
Then, $h$ and $\eta$ have a common unique fixed point in $C \cap D$.\end{cor}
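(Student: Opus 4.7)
The natural strategy is to recast this as a direct application of Theorem $2.3$ by choosing $Q=\mathcal{Z}=\mathrm{id}_{C\cup D}$ and encoding the cyclic structure $hC\subseteq D$, $\eta D\subseteq C$ through the auxiliary functions $\gamma,\delta$. First I would take the underlying space to be $\mathcal{O}:=C\cup D$; since $C$ and $D$ are closed in the complete partial $b$-metric space, $C\cup D$ is closed and hence complete. Then define $\gamma(z)=1$ if $z\in C$ and $\gamma(z)=0$ otherwise, $\delta(z)=1$ if $z\in D$ and $\delta(z)=0$ otherwise, and set $Q=\mathcal{Z}$ equal to the identity on $\mathcal{O}$.

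With this setup, the remaining hypotheses of Theorem $2.2$ drop out almost automatically. Cyclic $(\gamma,\delta)$-admissibility with respect to $(Q,\mathcal{Z})$ becomes: $\gamma(Qw)\ge 1$ means $w\in C$, whence $hw\in D$, i.e.\ $\delta(hw)\ge 1$; the symmetric implication uses $\eta D\subseteq C$. Since $C\cap D\neq\emptyset$ we may choose $v_{0}\in C\cap D$, giving $\gamma(Qv_{0})=\delta(\mathcal{Z}v_{0})=1$. The inclusions $h\mathcal{O}\subseteq\mathcal{Z}\mathcal{O}=\mathcal{O}$ and $\eta\mathcal{O}\subseteq Q\mathcal{O}=\mathcal{O}$ are trivial, and $Q\mathcal{O}=\mathcal{O}$ is partially $b$-closed; the sequential condition (iii) follows since any sequence with $\gamma(v_{m}),\delta(v_{m})\ge 1$ lies in $C\cap D$, whose closedness pins the limit in $C\cap D$. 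Because $Q=\mathcal{Z}=I$, the quantity $N_{s}(w,z)$ of the generalized $TAC$-contraction reduces exactly to the expression in the corollary, and $\gamma(Qw)\delta(\mathcal{Z}v)\ge 1$ is precisely the condition $w\in C,\ v\in D$ under which the hypothesis is assumed; thus $(h,\eta)$ is a generalized $TAC$-$(Q,\mathcal{Z})$ contraction in $(\mathcal{O},\partial,s)$. Theorem $2.2$ then produces $d\in\mathcal{O}$ that is a common coincidence point, which for $Q=\mathcal{Z}=I$ means $hd=d=\eta d$.

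The main obstacle is verifying the extra hypothesis (ii) of Theorem $2.3$, namely $\gamma(Qw)\ge 1$ and $\delta(\mathcal{Z}x)\ge 1$ at the coincidence points produced. This reduces to checking $d\in C\cap D$. To see this I would trace the Picard-type iteration $(2.1)$: starting from $v_{0}\in C\cap D$ one verifies inductively that $v_{2m}\in C$ and $v_{2m+1}\in D$, hence the subsequence $\{d_{2m}\}=\{hv_{2m}\}\subseteq hC\subseteq D$ converges to $d\in D$ (closedness of $D$), while $\{d_{2m+1}\}=\{\eta v_{2m+1}\}\subseteq \eta D\subseteq C$ converges to $d\in C$. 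Thus $d\in C\cap D$, so $\gamma(d)=\delta(d)=1$, and condition (ii) of Theorem $2.3$ holds. Weak compatibility of $(h,Q)$ and $(\eta,\mathcal{Z})$ is trivial since $Q=\mathcal{Z}=I$ commutes with everything.

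Invoking Theorem $2.3$ now yields a unique common fixed point of $h,\eta,Q,\mathcal{Z}$; since $Qz=\mathcal{Z}z=z$ is automatic, this is exactly a unique common fixed point of $h$ and $\eta$, and by the argument above it necessarily lies in $C\cap D$. The only nontrivial piece of the plan is the bookkeeping step that places the iterates alternately in $C$ and $D$; everything else is a routine translation of Theorem $2.3$ into the cyclic two-set setting.
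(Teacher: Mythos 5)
Your proposal is correct and follows essentially the same route as the paper: define the indicator-type $\gamma,\delta$ on $C\cup D$, take $Q=\mathcal{Z}=I$, verify cyclic $(\gamma,\delta)$-admissibility, the sequential condition and the contraction inequality, and invoke Theorem $2.3$. You are in fact more careful than the paper, which silently skips hypothesis $(ii)$ of Theorem $2.3$ (that the coincidence points satisfy $\gamma(Qw)\ge 1$ and $\delta(\mathcal{Z}x)\ge 1$); your tracing of the iteration to show the limit lies in $C\cap D$ supplies exactly that missing verification.
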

\begin{proof}Define $\gamma , \delta : C \cup D \rightarrow \mathbb{R^{+}}$ as \vspace{0.2cm}\\$\gamma(w) = \begin{cases}
 1, & \text{if $w \in C$ } \\  
 0, & \text{otherwise} 
  \end{cases} $
 and  
 $\delta (w) =  \begin{cases}
 1, & \text{if $w \in D$ } \\  
 0, & \text{otherwise}
  \end{cases} $\vspace{0.2cm}\\
 For $w , v \in C \cup D$ with  $\gamma (w) \delta (v) \geq 1$, we have $\gamma (w) = 1 , \delta (v) =1$
  and   $w\in C ,$ \\ $v \in D. $ 
From given condition,  we have 
$\xi(s^{3}\partial(hw, \eta v)) \leq  H(\xi(N_{s}(w, v) ,$\\ $\omega(N_{s}(w,v) )  ).  $ 
Now, if $w \in C \cup D$ with $\gamma (w) \geq 1$, then $w \in C$ and $hw \in hC \subseteq D.$ So, $\delta (hw) \geq 1$.
Also, if $v \in C \cup D $  with $ \delta (v) \geq 1$ , then $v \in D $ and $\eta v \in \eta D \subseteq C.$ So,  $\gamma (\eta v) \geq 1 .$
Therefore, $(h , \eta )$ is cyclic $(\gamma , \delta)$-admissible map.
As  $C \cap D  \neq \emptyset$. So,  there exist $v_{0} \in C \cup D$ such that $\gamma (v_{0}) \geq 1$ and $\delta (v_{0}) \geq 1$. If $ \{ v_{n}\}_{n\in\textbf{W}}$ is a sequence in $ C \cup D$ with $v_{n} \rightarrow v$ and $\gamma (v_{n} ) \geq 1 , \delta (v_{n}) \geq 1 $,
for all $n$, then $v_{n} \in C$ and $v_{n} \in D .$
Since $C$ and $D$ are closed,  $v \in C$ and $ D$. Hence, $\gamma (v) \geq 1$  and $\delta (v) \geq 1$. 
Take $Q = \mathcal{Z} = I$ on $\mathcal{O}$. Thus, $(h,\eta )$ is generalized $TAC$-contractive map. Thus, $h$ and  $\eta $ have a common unique fixed point in $C \cap D$ by Theorem $2.3$.
\end{proof}
\begin{cor}
    Let $(\mathcal{O}, \partial )$ be a complete metric space. Assume that $h ,\eta , Q , \mathcal{Z}$ be four self maps on $\mathcal{O}$ such that;
 \begin{enumerate}[label=(\roman*)]
\item $h\mathcal{O} \subseteq \mathcal{Z}\mathcal{O}$ and $\eta \mathcal{O} \subseteq Q\mathcal{O}$, 
\item  $\xi (\partial(hw , \eta v)) \leq \xi(N_{s}(w ,v) ) - \omega(N_{s}(w,v) )  ,$
 where  $\xi \in \boldsymbol{\Xi} ,\omega \in \boldsymbol{\Omega^{1}} ,N_{s}(w,v) = \max\{\partial(Qw ,\mathcal{Z}v),\partial( hw, Qw) , \partial(\eta v , \mathcal{Z}v ),  \frac{\partial(Qw , \eta v) + \partial(hw, \mathcal{Z}v)}{2s} \} $, for all  \mbox{$w , v \in \mathcal{O}.$ }
\item  $(h, Q)$ and $(\eta, \mathcal{Z})$ are weakly compatible. 
\end{enumerate}
Then, $ h,\eta, Q$ and $\mathcal{Z} $ have a common  unique  fixed point in $\mathcal{O}$ assuming that atleast one of the ranges $h\mathcal{O} ,\eta \mathcal{O} ,Q\mathcal{O}$ and  $\mathcal{Z}\mathcal{O}$ is closed.
\end{cor}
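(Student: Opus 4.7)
The plan is to obtain Corollary 2.5 as a direct specialization of Theorem 2.3. First I would note that any complete metric space $(\mathcal{O}, \partial)$ is a complete partial $b$-metric space with constant $s = 1$ and $\partial(w,w) = 0$ for all $w$; the usual metric axioms imply the four partial $b$-metric axioms in this case, and both notions of completeness and closedness coincide. Thus condition (v) of Theorem 2.2 reduces exactly to the closedness hypothesis in the corollary.

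Next I would specialize the auxiliary data by setting $\gamma(w) = \delta(w) \equiv 1$ on $\mathcal{O}$ and taking $H(u,v) = u - v$ (understood as $\max\{u - v, 0\}$ so that the codomain lies in $[0,\infty)$). This $H$ is continuous, satisfies $H(u,v) \leq u$, and $H(u,v) = u$ forces $v = 0$, so $H \in \mathscr{C}$. With these choices, the contractive hypothesis of Theorem 2.3 at $s = 1$ becomes
\[
\xi\bigl(\partial(hw, \eta v)\bigr) \leq \xi\bigl(N_1(w,v)\bigr) - \omega\bigl(N_1(w,v)\bigr),
\]
which is precisely hypothesis (ii) of the corollary. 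Hence the pair $(h, \eta)$ qualifies as a generalized $TAC$-$(Q, \mathcal{Z})$ contraction on the partial $b$-metric space $(\mathcal{O}, \partial, 1)$.

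It then remains to verify the other hypotheses of Theorem 2.3. Condition (i) of Theorem 2.2 is hypothesis (i) of the corollary; conditions (ii), (iii), (iv) of Theorem 2.2 and condition (ii) of Theorem 2.3 are all automatic because $\gamma$ and $\delta$ are identically $1$ (every admissibility or limit requirement collapses); and the closedness of one of the four ranges is assumed. Weak compatibility of $(h, Q)$ and $(\eta, \mathcal{Z})$ is hypothesis (iii). Theorem 2.3 then produces a unique common fixed point of $h, \eta, Q, \mathcal{Z}$ in $\mathcal{O}$. The only minor technicality, rather than a genuine obstacle, is confirming that $u - v$ behaves as a member of $\mathscr{C}$; since the governing inequality (ii) implicitly forces $\xi(N_1) \geq \omega(N_1)$ at the arguments that arise in the proof of Theorem 2.3, this causes no difficulty.
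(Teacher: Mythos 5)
Your proposal is correct and follows essentially the same route as the paper: it takes $\gamma \equiv \delta \equiv 1$, $H(t,v)=t-v$ for $t\geq v$ and $0$ otherwise, regards the complete metric space as a complete partial $b$-metric space with $s=1$, and invokes Theorem 2.3. The only cosmetic point is that $H(u,v)=u$ forces $u=0$ \emph{or} $v=0$ (not necessarily $v=0$), which is exactly what membership in $\mathscr{C}$ requires, so nothing is affected.
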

\begin{proof}
    Define $\gamma , \delta : \mathcal{O} \rightarrow \mathbb{R^{+}}$ as 
 $\gamma (w) = 1$ and $\delta (w) = 1$  and 
define $H: \mathbb{R^{+}}\times \mathbb{R^{+}}\rightarrow \mathbb{R^{+}}$  as 
$H(t, v) = \begin{cases}
 t-v, & \text{if $t\geq v$ } \\  
 0, & \text{otherwise} 
  \end{cases}$ \\
then,  result follows from  Theorem  $2.3$.
\end{proof}
\begin{ex}
Let $ \mathcal{O} = \mathbb{R^{+}} $ and define $ \partial : \mathcal{O} \times \mathcal{O} \rightarrow \mathbb{R^{+}} $ as
$\partial(w,z) = \max \{w, z\} $, for all $ w , z \in \mathcal{O} $. Clearly,  it is a complete partial $b$-metric space with s=1.
Also, define $ \gamma , \delta  : \mathcal{O} \rightarrow \mathbb{R^{+}} $ and $ h, \eta , Q , \mathcal{Z} : \mathcal{O} \rightarrow \mathcal{O} $ as \vspace{0.3cm}\\
$\gamma(z) =\begin{cases}
 \frac{\sqrt{z}}{16\sqrt{2}}, & \text{if $ z \in (0, 32 )  $ } \vspace{0.2cm}\\  2, & \text{ otherwise  } 
  \end{cases}, ~ \delta(z) = \begin{cases}
 \frac{1}{4}, & \text{if $ z \in (0, 8 )  $ } \vspace{0.2cm}\\  
 1, & \text{otherwise } 
  \end{cases} $ 
  and  $ h(z) = z,$\vspace{0.3cm}\\
  $\eta(z) =  \begin{cases}
 0, & \text{if $ z \in [ 0, 64 ) $ } \vspace{0.2cm}\\  
 \frac{z}{2}, & \text{otherwise } 
  \end{cases} , ~\mathcal{Z}(z) = 
 z^{3} ,
  ~Q(z)=\frac{z^{2}}{2}, $ for all $z \in \mathcal{O}. $\vspace{0.3cm}\\
  Now, $\gamma(Qz)= \begin{cases}
 \frac{z}{32}, & \text{if $ z \in ( 0, 8 )  $ } \vspace{0.2cm}\\  
2, & \text{otherwise } 
  \end{cases} , ~\delta(\mathcal{Z}z)=\begin{cases}
 \frac{1}{4}, & \text{if $ z \in ( 0, 2 )  $ } \vspace{0.2cm}\\  
1, & \text{otherwise  } 
  \end{cases} $
 and \vspace{0.3cm}\\$\delta(hz) = \begin{cases}
 \frac{1}{4}, & \text{if $ z \in ( 0, 8 )  $ } \vspace{0.2cm}\\  
 1, & \text{otherwise } 
  \end{cases} ,~\gamma(\eta z)=2.$ \vspace{0.3cm}\\
 Since $0 \in \mathcal{O}$, $\gamma(Q(0)) = 2 \geq 1$  and $\delta (\mathcal{Z}(0)) = 1 \geq 1$.\\
 Also, if $\{v_{n}\}_{n \in W }$ is a sequence in $\mathcal{O}$ with $v_{n} \rightarrow v,\gamma(v_{n})\geq 1$ 
 and $\delta(v_{n})\geq 1$ for  all $n\in W,$ then $\gamma(v)\geq 1$ and $\delta(v) \geq 1$.\\
 Now, define $\xi , \omega: \mathbb{R^{+}} \rightarrow \mathbb{R^{+}}$ and 
 $H : \mathbb{R^{+}} \times  \mathbb{R^{+}} \rightarrow \mathbb{R^{+}} $ as 
 $\xi(t) = t ,\omega(t)= log(t+3)$ and 
 $H(s, t) = \frac{1}{2}s.$
 Clearly, $ \xi \in \Xi ,\omega \in \Omega^{1} ,  H \in \mathscr{C}.$\vspace{0.3cm}\\ Now, $\gamma(Qz) \delta(\mathcal{Z}w) = \begin{cases}
 \frac{z}{128}, & \text{if $z \in (0,8)$ and $w \in (0,2)$}\vspace{0.2cm}\\\frac{z}{32},  & \text{if $z \in (0,8)$ and $w \in \{0\} \cup [2, \infty) $}\vspace{0.2cm}\\
 \frac{1}{2}, & \text{if $z \in \{0\} \cup [8, \infty)$ and $w \in (0,2)$} \vspace{0.2cm}\\
2,  & \text{if $z \in \{0\} \cup [8, \infty)$ and $w \in   \{0\} \cup [2, \infty) $}
 \end{cases} $ \vspace{0.3cm}\\
 Note that if $\gamma(Qz) \delta(\mathcal{Z}w) \geq 1 $, then $z \in \{0\} \cup [8, \infty)$ and $ w \in \{0\}\cup [2,\infty ). $ \\
Now, if $z \in \{0\} \cup [8, \infty)$ and $ w \in \{0\}\cup [2,\infty)$, then
$\xi(s^{3}\partial(hz , \eta w)) = \xi(\partial(z , 0 )) = \xi(z) = z$ and $\frac{\partial(Qz, \mathcal{Z}w)}{2} = \frac{\partial(\frac{z^{2}}{2},w^{3})}{2} \leq \frac{N_{s}(z,w)}{2} = H(  \xi(N_{s}(z,w)) , $\\$\omega(N_{s}(z,w)) ).$
Thus, $\xi(s^{3}\partial(hz , \eta w)) \leq  H(  \xi(N_{s}(z,w)) , \omega(N_{s}(z,w)) ),$ for all  $z \in \{0\} \cup [8, \infty)$ and $w \in \{0\}\cup [2,\infty ), $ where $N_{s}(w, z ) = \max \{ \partial(Qw, \mathcal{Z}z) , $\\$\partial( hw, Qw) , \partial(\mathcal{Z}z , \eta z) ,$
 $\frac{\partial(Qw , \eta z) + \partial(hw, \mathcal{Z}z)}{2s} \}$. So, $(h,\eta)$  is generalized TAC-$(Q,\mathcal{Z})$ contractive map.
Note, if $\gamma(Qz) \geq 1$, then 
  $z\in \{0\}\cup [8,\infty ) $ implies $\delta(hz)\geq 1 $.
  Also, if $\delta(\mathcal{Z}z) \geq 1$, then 
  $z \in \{0\}\cup [2,\infty ) $ implies $\gamma(\eta z)\geq 1 $.
  Thus, $(h , \eta)$ is cyclic $(\gamma , \delta )$ admissible  with respect to $(Q , \mathcal{Z} ).$  Also, $h\mathcal{O} \subseteq \mathcal{Z}\mathcal{O} , \eta \mathcal{O} \subseteq Q\mathcal{O} $ and $h\mathcal{O}$ is closed.
Thus, by Theorem $2.2$ we get $\textbf{P}(h,Q)\neq \emptyset$ and $\textbf{P}(\eta ,\mathcal{Z})\neq \emptyset.$ Also, note that $0$ is the common fixed point of $h,Q,\eta,\mathcal{Z}$.
\end{ex}
\section{Statements and Declarations}
\noindent{\bf Ethical Approval}\\
Not applicable.\\

\noindent{\bf Competing interests}\\
The authors declare that they have no conflict of interest.\\

\noindent{\bf Authors' contributions}\\
All authors contributed equally to the manuscript.\\

\noindent{\bf Availability of data and materials}\\
No data or software were used to support the findings of this study, and no data or software has been generated.

\end{document}